\newtheorem{theorem}{Theorem}[section]
\newtheorem{prop}[theorem]{Proposition}
\theoremstyle{definition}
\newtheorem{defn}[theorem]{Definition}
\newtheorem{coro}[theorem]{Corollary}
\newtheorem{prop-def}{Proposition-Definition}[section]
\newtheorem{coro-def}{Corollary-Definition}[section]
\newtheorem{remark}[theorem]{Remark}
\newtheorem{exam}[theorem]{Example}
\newcommand{\nc}{\newcommand}
\nc{\tred}[1]{\textcolor{red}{#1}}
\nc{\tblue}[1]{\textcolor{blue}{#1}}
\nc{\tgreen}[1]{\textcolor{green}{#1}}
\nc{\tpurple}[1]{\textcolor{purple}{#1}}
\nc{\btred}[1]{\textcolor{red}{\bf #1}}
\nc{\btblue}[1]{\textcolor{blue}{\bf #1}}
\nc{\btgreen}[1]{\textcolor{green}{\bf #1}}
\nc{\btpurple}[1]{\textcolor{purple}{\bf #1}}
\nc{\NN}{{\mathbb N}}
\nc{\ncsha}{{\mbox{\cyr X}^{\mathrm NC}}} \nc{\ncshao}{{\mbox{\cyr
X}^{\mathrm NC}_0}}
\renewcommand{\frak}{\mathfrak}
\newcommand{\efootnote}[1]{}
\renewcommand{\textbf}[1]{}
\newcommand{\delete}[1]{}
\nc{\mlabel}[1]{\label{#1}}  
\nc{\mcite}[1]{\cite{#1}}  
\nc{\mref}[1]{\ref{#1}}  
\nc{\mbibitem}[1]{\bibitem{#1}} 
\nc{\mlabel}[1]{\label{#1}  
{\hfill \hspace{1cm}{\small\tt{{\ }\hfill(#1)}}}}
\nc{\mcite}[1]{\cite{#1}{\small{\tt{{\ }(#1)}}}}  
\nc{\mref}[1]{\ref{#1}{{\tt{{\ }(#1)}}}}  
\nc{\mbibitem}[1]{\bibitem[\bf #1]{#1}} 
\nc{\rbp}{Rota-Baxter pair\xspace}
\nc{\rbps}{Rota-Baxter pairs\xspace}
\nc{\rbped}{Rota-Baxter paired\xspace}
\nc{\srbped}{special Rota-Baxter paired\xspace}
\nc{\grbped}{generic Rota-Baxter paired\xspace}
\nc{\tforall}{\text{ for all }}
\nc{\bin}[2]{ (_{\stackrel{\scs{#1}}{\scs{#2}}})}  
\nc{\binc}[2]{ \left (\!\! \begin{array}{c} \scs{#1}\\
    \scs{#2} \end{array}\!\! \right )}  
\nc{\bincc}[2]{  \left ( {\scs{#1} \atop
    \vspace{-1cm}\scs{#2}} \right )}  
\nc{\bs}{\bar{S}} \nc{\cosum}{\sqsubset} \nc{\la}{\longrightarrow}
\nc{\rar}{\rightarrow} \nc{\dar}{\downarrow} \nc{\dprod}{**}
\nc{\dap}[1]{\downarrow \rlap{$\scriptstyle{#1}$}}
\nc{\md}{\mathrm{dth}} \nc{\uap}[1]{\uparrow
\rlap{$\scriptstyle{#1}$}} \nc{\defeq}{\stackrel{\rm def}{=}}
\nc{\disp}[1]{\displaystyle{#1}} \nc{\dotcup}{\
\displaystyle{\bigcup^\bullet}\ } \nc{\gzeta}{\bar{\zeta}}
\nc{\hcm}{\ \hat{,}\ } \nc{\hts}{\hat{\otimes}}
\nc{\free}[1]{\bar{#1}}
\nc{\uni}[1]{\tilde{#1}} \nc{\hcirc}{\hat{\circ}} \nc{\lleft}{[}
\nc{\lright}{]} \nc{\lc}{\lfloor} \nc{\rc}{\rfloor}
\nc{\curlyl}{\left \{ \begin{array}{c} {} \\ {} \end{array}
    \right .  \!\!\!\!\!\!\!}
\nc{\curlyr}{ \!\!\!\!\!\!\!
    \left . \begin{array}{c} {} \\ {} \end{array}
    \right \} }
\nc{\longmid}{\left | \begin{array}{c} {} \\ {} \end{array}
    \right . \!\!\!\!\!\!\!}
\nc{\onetree}{\bullet} \nc{\ora}[1]{\stackrel{#1}{\rar}}
\nc{\ola}[1]{\stackrel{#1}{\la}}
\nc{\ot}{\otimes} \nc{\mot}{{{\boxtimes\,}}}
\nc{\otm}{\overline{\boxtimes}} \nc{\sprod}{\bullet}
\nc{\scs}[1]{\scriptstyle{#1}} \nc{\mrm}[1]{{\rm #1}}
\nc{\margin}[1]{\marginpar{\rm #1}}   
\nc{\dirlim}{\displaystyle{\lim_{\longrightarrow}}\,}
\nc{\invlim}{\displaystyle{\lim_{\longleftarrow}}\,}
\nc{\mvp}{\vspace{0.3cm}} \nc{\tk}{^{(k)}} \nc{\tp}{^\prime}
\nc{\ttp}{^{\prime\prime}} \nc{\svp}{\vspace{2cm}}
\nc{\vp}{\vspace{8cm}} \nc{\proofbegin}{\noindent{\bf Proof: }}
\nc{\proofend}{$\blacksquare$ \vspace{0.3cm}}
\nc{\modg}[1]{\!<\!\!{#1}\!\!>}
\nc{\intg}[1]{F_C(#1)} \nc{\lmodg}{\!
<\!\!} \nc{\rmodg}{\!\!>\!}
\nc{\cpi}{\widehat{\Pi}}
\nc{\sha}{{\mbox{\cyr X}}}  
\nc{\ssha}{{\mbox{\cyrs X}}} 
\nc{\shpr}{\diamond}    
\nc{\shp}{\ast} \nc{\shplus}{\shpr^+}
\nc{\shprc}{\shpr_c}    
\nc{\msh}{\ast} \nc{\zprod}{m_0} \nc{\oprod}{m_1}
\nc{\labs}{\mid\!} \nc{\rabs}{\!\mid}
\nc{\sqmon}[1]{\langle #1\rangle}
\nc{\mmbox}[1]{\mbox{\ #1\ }} \nc{\dep}{\mrm{dep}} \nc{\fp}{\mrm{FP}}
\nc{\rchar}{\mrm{char}} \nc{\End}{\mrm{End}} \nc{\Fil}{\mrm{Fil}}
\nc{\Mor}{Mor\xspace} \nc{\gmzvs}{gMZV\xspace}
\nc{\gmzv}{gMZV\xspace} \nc{\mzv}{MZV\xspace}
\nc{\mzvs}{MZVs\xspace} \nc{\Hom}{\mrm{Hom}} \nc{\id}{\mrm{id}}
\nc{\im}{\mrm{im}} \nc{\incl}{\mrm{incl}} \nc{\map}{\mrm{Map}}
\nc{\mchar}{\rm char} \nc{\nz}{\rm NZ} \nc{\supp}{\mathrm Supp}
\nc{\Alg}{\mathbf{Alg}} \nc{\Bax}{\mathbf{Bax}} \nc{\bff}{\mathbf f}
\nc{\bfk}{{\bf k}} \nc{\bfone}{{\bf 1}} \nc{\bfx}{\mathbf x}
\nc{\bfy}{\mathbf y}
\nc{\base}[1]{\bfone^{\otimes ({#1}+1)}} 
\nc{\Cat}{\mathbf{Cat}}
\nc{\detail}{\marginpar{\bf More detail}
    \noindent{\bf Need more detail!}
    \svp}
\nc{\Int}{\mathbf{Int}} \nc{\Mon}{\mathbf{Mon}}
\nc{\rbtm}{{shuffle }} \nc{\rbto}{{Rota-Baxter }}
\nc{\remarks}{\noindent{\bf Remarks: }} \nc{\Rings}{\mathbf{Rings}}
\nc{\Sets}{\mathbf{Sets}} \nc{\wtot}{\widetilde{\odot}}
\nc{\wast}{\widetilde{\ast}} \nc{\bodot}{\bar{\odot}}
\nc{\bast}{\bar{\ast}} \nc{\hodot}[1]{\odot^{#1}}
\nc{\hast}[1]{\ast^{#1}} \nc{\mal}{\mathcal{O}}
\nc{\tet}{\tilde{\ast}} \nc{\teot}{\tilde{\odot}}
\nc{\oex}{\overline{x}} \nc{\oey}{\overline{y}}
\nc{\oez}{\overline{z}} \nc{\oef}{\overline{f}}
\nc{\oea}{\overline{a}} \nc{\oeb}{\overline{b}}
\nc{\weast}[1]{\widetilde{\ast}^{#1}}
\nc{\weodot}[1]{\widetilde{\odot}^{#1}} \nc{\hstar}[1]{\star^{#1}}
\nc{\lae}{\langle} \nc{\rae}{\rangle}
\nc{\lf}{\lfloor}
\nc{\rf}{\rfloor}
\nc{\QQ}{{\mathbb Q}}
\nc{\RR}{{\mathbb R}} \nc{\ZZ}{{\mathbb Z}}
\nc{\cala}{{\mathcal A}} \nc{\calb}{{\mathcal B}}
\nc{\calc}{{\mathcal C}}
\nc{\cald}{{\mathcal D}} \nc{\cale}{{\mathcal E}}
\nc{\calf}{{\mathcal F}} \nc{\calg}{{\mathcal G}}
\nc{\calh}{{\mathcal H}} \nc{\cali}{{\mathcal I}}
\nc{\call}{{\mathcal L}} \nc{\calm}{{\mathcal M}}
\nc{\caln}{{\mathcal N}} \nc{\calo}{{\mathcal O}}
\nc{\calp}{{\mathcal P}} \nc{\calr}{{\mathcal R}}
\nc{\cals}{{\mathcal S}} \nc{\calt}{{\mathcal T}}
\nc{\calu}{{\mathcal U}} \nc{\calw}{{\mathcal W}} \nc{\calk}{{\mathcal K}}
\nc{\calx}{{\mathcal X}} \nc{\CA}{\mathcal{A}}
\nc{\fraka}{{\mathfrak a}} \nc{\frakA}{{\mathfrak A}}
\nc{\frakb}{{\mathfrak b}} \nc{\frakB}{{\mathfrak B}}
\nc{\frakD}{{\mathfrak D}} \nc{\frakF}{\mathfrak{F}}
\nc{\frakf}{{\mathfrak f}} \nc{\frakg}{{\mathfrak g}}
\nc{\frakH}{{\mathfrak H}} \nc{\frakL}{{\mathfrak L}}
\nc{\frakM}{{\mathfrak M}} \nc{\bfrakM}{\overline{\frakM}}
\nc{\frakm}{{\mathfrak m}} \nc{\frakP}{{\mathfrak P}}
\nc{\frakN}{{\mathfrak N}} \nc{\frakp}{{\mathfrak p}}
\nc{\frakS}{{\mathfrak S}} \nc{\frakT}{\mathfrak{T}}
\nc{\frakX}{\mathfrak{X}} \nc{\frakx}{\mathfrak{x}}
\nc{\fraky}{\mathfrak{y}}
\nc{\BS}{\mathbb{S}} \nc{\bfT}{\mathbf{T}}
\font\cyr=wncyr10 \font\cyrs=wncyr7
\nc{\li}[1]{\textcolor{red}{Li:#1}}
\nc{\ly}[1]{\textcolor{blue}{Liangyun: #1}}
\nc{\hh}[1]{\textcolor{purple}{Huihui:#1}}
\nc{\UN}{U_{N}} \nc{\lbar}[1]{\overline{#1}}
\nc{\FN}{F_{\kappa}} \nc{\FNN}{F_{-\lambda^2}}
\nc{\bre}{{\rm bre}} \nc{\w}{{\rm bre}}
\nc{\altx}{\Lambda}
\nc{\spr}{\cdot} \nc{\hma}{\mathcal{H}}
\nc{\rts}{\stackrel{\rightarrow}{\shpr}}
\nc{\ox}{\overline{\frak x}}
\nc{\oX}{\overline{X}} \nc{\mt}{T}
\nc{\mtw}{{\rm MTW}} \nc{\vep}{\varepsilon}
\nc{\mop}{P_A}
\begin{document}

\title{Rota-Baxter paired modules and their constructions from Hopf algebras}
%
\author{Huihui Zheng}
\address{Department of Mathematics, Nanjing Agricultural University,
Nanjing 210095, China}
\email{huihuizhengmail@126.com}

\author{Li Guo}
\address{Department of Mathematics and Computer Science,
         Rutgers University,
         Newark, NJ 07102, USA}
\email{liguo@rutgers.edu}

\author{Liangyun Zhang}
\address{Department of Mathematics, Nanjing Agricultural University,
Nanjing 210095, China}
\email{zlyun@njau.edu.cn}

\date{\today}
\begin{abstract}
In this paper, we introduce the concept of a \rbped module to study Rota-Baxter modules without necessarily a Rota-Baxter operator.
We obtain two characterizations of Rota-Baxter paired modules, and give some basic properties of Rota-Baxter paired modules. Beginning with the connection between the notion of integrals in the representations of Hopf algebra and of the notion of an integral algebra as a motivation and special case of Rota-Baxter algebra of weight zero, we obtain a large number of \rbped modules from Hopf related algebras and modules, including  semisimple Hopf algebras, weak Hopf algebras, Long bialgebras, quasitriangular Hopf algebras, weak Hopf modules, dimodules and Doi-Hopf modules. Some of them give new examples of Rota-Baxter algebras.
\end{abstract}

\subjclass[2010]{16T05, 16W99}

\keywords{Rota-Baxter algebra, Rota-Baxter module, Hopf algebra, dimodule,
Hopf module}

\maketitle

\tableofcontents

\setcounter{section}{0}

\allowdisplaybreaks

\section{ Introduction}

A {\bf Rota-Baxter algebra} (first known as a Baxter algebra) is an
algebra $A$ with a linear operator $P$ on $A$ that satisfies the {\bf Rota-Baxter identity}
\begin{equation}
P(x)P(y)=P(P(x)y)+P(xP(y))+\lambda P(xy)\ \ \tforall x,y\in A,
\mlabel{eq:rbo}
\end{equation}
where $\lambda$, called the {\bf weight}, is a fixed element in the base ring of the algebra $A$~\cite{G. Baxter1960,G.C. Rota1969}.

In the 1960s, Rota began a systematic study of Rota-Baxter algebras from an
algebraic and combinatorial perspective in connection with
hypergeometric functions, incidence algebras and symmetric functions~\cite{G.C. Rota1969}. In the Lie algebra context, the Rota-Baxter operator of weight zero was rediscovered by the physicists in the 1980s as the operator form of the classical Yang-Baxter equation~\mcite{STS}.

At the turn of this century, Rota-Baxter algebra, together with Hopf algebra and pre-Lie algebra, appeared as the main algebraic structures underlying the Connes-Kreimer approach~\mcite{CK1} of renormalization in quantum field theory.
Hopf algebra is also one of the main directions in the theoretical study of Rota-Baxter algebra. In 2003, Andrews et al.~\cite{F.V. Andrew2003} constructed a new class of Hopf algebras on free Rota-Baxter algebras by using combinatorial methods. Later study of Rota-Baxter algebras with examples and connections to Hopf algebras can be found in \cite{K. Ebrahimi2006}, \cite{R.Q. Jian2014} and
\cite{T. Ma2016}. Recently, Brzezi$\acute{n}$ski ~\cite{T. Brzezinski2016} and Gao-Guo-Zhang ~\cite{X. Gao2016} established the relationships between Rota-Baxter algebras and Hopf algebras by covariant functors and cocyles respectively.

To study the representations of Rota-Baxter algebras, the concept a Rota-Baxter module was introduced in~\mcite{GL} and was related to the ring of Rota-Baxter operators. By definition, a {\bf Rota-Baxter module} over a Rota-Baxter algebra $(A,P)$ is a pair $(M,T)$ where $M$ is a (left) $R$-module and $T:M\to M$ is a $\bfk$-linear operator such that
\begin{equation}
P(a)\cdot T(m)=T(P(a)\cdot m)+T(a\cdot T(m))+\lambda T(a \cdot m) \ \ \tforall a\in A, m\in M.
\mlabel{eq:rbm}
\end{equation}
Further studies in this direction were pursued in~\mcite{GGQ,LQ,QP} on regular-singular decompositions, geometric representations and derived functors of Rota-Baxter modules, especially those over the Rota-Baxter algebras of Laurent series and polynomials.

The present study stemmed from the observation that, in the defining equation~\eqref{eq:rbm} of a Rota-Baxter operator $T$ on a module $M$, the axiom in Eq.~\eqref{eq:rbo} of a Rota-Baxter operator $P$ on an algebra $R$ is not strictly needed. This suggests that Rota-Baxter modules can be considered in a much broader context in which the operator $T$ on the module $M$ is required to satisfy Eq.~\eqref{eq:rbm}, without requiring $P$ to satisfy Eq.~\eqref{eq:rbo}. This lead us to the concept of a {\bf \rbped module}, meaning that $P$ is paired with $T$ to form a Rota-Baxter module. This gives a natural generalization of Rota-Baxter modules.

Two aspects of \rbped suggest that the benefit of studying \rbped modules. On the one hand, many properties of Rota-Baxter modules, even of Rota-Baxter algebras, are naturally generalized to \rbped modules. Of course, there are properties which are completely new to \rbped modules. In fact, under suitable linearity and idempotent properties of a linear operator $T$ on a module, we show that the operator gives a \rbped module regardless the operator $P$ on the algebra (Theorem~\mref{thm:3.2}), leading to the concept of a {\bf \grbped module}.

On the other hand, the more relaxed condition on \rbped allows it to have broader connections and applications, especially to Hopf algebras. A case in point is the notion of an integral, a notion which appears both in Rota-Baxter algebras and to Hopf algebras with different meanings. The integral is one of the motivations and special cases of Rota-Baxter algebras (of weight zero). It is also a basic tool to study the representations of Hopf algebras. As we will see (Corollary~\mref{co:int}), integrals in Hopf algebras are naturally related to \rbped modules, of weight -1 instead of zero.

We present these two aspects in the two main sections of this paper.
In Section~\mref{sec:rbp}, we introduce the basic concept of \rbped modules and give some preliminary examples of \rbped modules, for example, on the endomorphism algebra $\End (M)$ of a \rbped module $M$.
We obtain two characterizations of \rbped modules. One is a generalization of the Atkinson factorization~\cite{F.V. Atkinson,L. Guo2012}. One is new for \rbped module under the assumption of quasi-idempotency. Further properties of \rbped modules are also provided. In Section~\mref{sec:const}, we construct a large number of \rbped modules from Hopf algebra related structures. Along the way, we obtain new examples of Rota-Baxter algebras. We first apply a linear functional, especially an integral, on a Hopf algebra $H$ to give a \rbped module structure on $H$. We also obtain \rbped modules on $H$ from a target map on a weak bialgebra $H$ and an antipode on a quantum commutative weak Hopf algebra $H$. We then consider modules over a Hopf algebra and equip the modules with \rbped module structures via the antipodes. On a module which is left module over a bialgebra $H$ and at the same time a right comodule over $H$ (a left, right $H$-dimodule), \grbped modules are obtained from linear functional on $H$ which are idempotent under the convolution product, with special attention given to cointegrals, skew pairing Long bialgebras, braided bialgebras and quasitriangular bialgebras. Finally we build \rbped module structures from weak right Doi-Hopf modules on a weak Hopf algebra.
\smallskip

\noindent

{\bf Notations.} Throughout the paper, all algebras, linear maps and tensor products are taken over a fixed commutative ring $\bfk$ unless otherwise specified. Likewise, an algebra means a unitary one and a module means a left unitary one.

\section{\rbped modules}
\mlabel{sec:rbp}
In this section, we introduce the concept of a \rbp module, its characterizations and other properties.

\subsection{\rbped modules and their characterizations}
For a Rota-Baxter algebra $(A,P)$ defined in Eq.~\eqref{eq:rbo},
the linear map $P$ is called a {\bf Rota-Baxter operator of weight $\lambda$}.

We refer the reader to~\cite{L. Guo2012} for further discussions on Rota-Baxter algebras and only give the following simple examples of Rota-Baxter algebras which we will be revisited later.
\begin{exam}\mlabel{ex:1.2}
\begin{enumerate}
\item
Let $A$ be an algebra. If $A$ is a augmented algebra~\cite{L. Positselski1995} in the sense that there exists an algebra homomorphism $f: A\rightarrow \bfk$, then $(A,P_f)$ is a Rota-Baxter algebra of weight $-1$, where the operator $P_f$ is given by
$$
P_f: A\rightarrow A,\ \ a\mapsto f(a)1_A \ \ \tforall a\in A.
$$
\item
Fix an $r\in A$ and define a map
$$P: A\rightarrow A, \quad P(a)=ra \ \ \text{ for all } a\in A. $$
Then, $(A,P)$ is a Rota-Baxter algebra with weight $-1$ if only if $r^2=r$ since Eq.~(\ref{eq:rbo}) is equivalent to $r^2ab-rab=0$ for all $a,b\in A$, in particular for $a=b=1$.
Take $A$ to be the $n\times n$ matrix algebra M$_{n}(\RR)$ with entries in the real number field and, for any $j=1,2,\cdots,n$, let $E_{jj}$ denotes the $n\times n$ matrix whose $(j,j)$-entry is $1$ and other entries are $0$. Then we have Rota-Baxter algebras (M$_{n}(\RR), P_j)$ of weight $-1$, where
$$P_j:\mathrm{M}_{n}(\RR)\rightarrow \mathrm{M}_{n}(\RR), \quad
M\mapsto E_{jj}M \text{ for all } M\in \mathrm{M}_n(\RR).
$$
\end{enumerate}
\end{exam}

Now we introduce the key concept of this paper.

\begin{defn}
Fix a $\lambda\in \bfk$.
Let $A$ be an algebra and $M$ a left $A$-module. A pair $(P,T)$ of linear maps $P: A\rightarrow A$ and $T: M\rightarrow M$ is called a {\bf \rbped operator} of weight $\lambda$ on $(A,M)$ or simply on $M$ if
\begin{equation}
P(a)\cdot T(m)=T(P(a)\cdot m)+T(a\cdot T(m))+\lambda T(a\cdot m) \quad \text{ for all } a\in A, m\in M.
\mlabel{eq:rbp}
\end{equation}
We then call the triple $(M,P,T)$ a {\bf \rbped $A$-module} of weight $\lambda$. We also called the pair $(M,T)$ a {\bf \srbped $A$-module} of weight $\lambda$, matched with $P$.

For a given linear operator $T:M\to M$, if for every linear map $P: A\rightarrow A$, $(M,P,T)$ is a \rbped $A$-module of weight $\lambda$, then $(M,T)$ is called a {\bf \grbped $A$-module} of weight $\lambda$.
\mlabel{de:2.1}
\end{defn}

A {\bf \rbped $A$-submodule} $N$ of a \rbped $A$-module $(M,P,T)$ is an $A$-submodule $N$ of $M$ such that $T(N)\subseteq N$. Then $N$ is a \rbped $A$-module. A {\bf Rota-Baxter module map} $f:(M,P,T)\rightarrow (M',P',T')$ of the same weight $\lambda$ is a $A$-module map such that $f\circ T=T'\circ f$. Then $\ker f$ and $\im f$ are \rbped $A$-submodules of $M$ and $M'$ respectively.
The same notions can be defined for {\bf right (generic) \rbped $A$-module}, etc.

\begin{exam}\mlabel{ex:2.2}
 \begin{enumerate}
 \item
Let $A$ be an algebra, regarded also as a left $A$-module. If $(A, P)$ is a Rota-Baxter algebra of weight $\lambda$, then, $(A,P,P)$ is a \rbped $A$-module of weight $\lambda$.
In particular, let $A$ be a augmented algebra. Then, by Example~\mref{ex:1.2}, $(A,P_f,P_f)$ is a \rbped $A$-module of weight $-1$.
\item By Example~\mref{ex:1.2} and the previous example, we see that (M$_{n}(\RR), P_j, P_j)$ is a Rota-Baxter paired M$_{n}(\RR)$-module, for any $j=1,2,\cdots,n$, where $P_j:$ M$_{n}(\RR)\rightarrow$ M$_{n}(\RR)$ is given by
$$
P_j(M)=E_{jj}M.
$$
\end{enumerate}
\end{exam}

In what follows, we give conditions for an $A$-module to be a \rbped module or a \grbped module.

Recall that a linear operator $T:M\to M$ is called {\bf quasi-idempotent of weight $\lambda$} if $T^2=-\lambda T$~\cite{AM,GL}. We have the following characterization of \grbped modules.

\begin{theorem}
Let $M$ be a left $A$-module.
\begin{enumerate}
\item
Let $T:M\to M$ be $A$-linear. Then the following statements are equivalent.
\begin{enumerate}
\item
$(M,T)$ is a \grbped $A$-module of weight $\lambda$;
\mlabel{it:3.2-1.1}
\item
There is a linear operator $P:A\to A$ such that $(M,P,T)$ is a \rbped $A$-module of weight $\lambda$. That is, $(M,T)$ is a \srbped $A$-module for some $P:A\to A$.
\mlabel{it:3.2-1.2}
\item
$T$ is quasi-idempotent of weight $\lambda$;
\mlabel{it:3.2-1.3}
\end{enumerate}
\mlabel{it:3.2-1}
\item
Let $T:M\to M$ be $\bfk$-linear. Denote
$$C_M:=\{a\in A\,|\, T(a\cdot m)=a\cdot T(m)\}.$$ Then $C_M$ is a subalgebra of $A$ and $T$ is $C_M$-linear.
\mlabel{it:3.2-2}
\item
Let $T:M\to M$ be $\bfk$-linear. The following statements are equivalent. \begin{enumerate}
\item
$(M,T)$ is a \grbped $C_M$-module of weight $\lambda$;
\item
There is a linear operator $P:C_M\to C_M$ such that $(M,P,T)$ is a \rbped $C_M$-module of weight $\lambda$;
\item
$T$ is quasi-idempotent of weight $\lambda$.
\end{enumerate}
\mlabel{it:3.2-3}
\end{enumerate}
\mlabel{thm:3.2}
\end{theorem}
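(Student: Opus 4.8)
The plan is to prove parts~(a) and~(b) directly, and then to obtain part~(c) as a formal consequence of part~(a) applied to the subalgebra $C_{M}$.

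For part~(a), the implication (i)$\Rightarrow$(ii) is immediate: by definition $(M,T)$ being \grbped means that $(M,P,T)$ is a \rbped $A$-module for \emph{every} linear $P\colon A\to A$, in particular for at least one such $P$. For (ii)$\Rightarrow$(iii), I would take the defining identity~\eqref{eq:rbp} and rewrite its right-hand side using the $A$-linearity of $T$, namely $T(P(a)\cdot m)=P(a)\cdot T(m)$, $T(a\cdot T(m))=a\cdot T^{2}(m)$, and $\lambda T(a\cdot m)=a\cdot(\lambda T(m))$. Substituting these into~\eqref{eq:rbp} and cancelling the common summand $P(a)\cdot T(m)$ leaves $a\cdot\bigl(T^{2}(m)+\lambda T(m)\bigr)=0$ for all $a\in A$ and $m\in M$; setting $a=1_{A}$ and using that $M$ is unital gives $T^{2}=-\lambda T$, i.e.\ $T$ is quasi-idempotent of weight $\lambda$. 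For (iii)$\Rightarrow$(i), I would run the same computation in reverse: for \emph{any} linear $P\colon A\to A$, the three identities above show that the right-hand side of~\eqref{eq:rbp} equals $P(a)\cdot T(m)+a\cdot\bigl(T^{2}(m)+\lambda T(m)\bigr)$, which collapses to $P(a)\cdot T(m)$ once $T^{2}=-\lambda T$, matching the left-hand side. Hence $(M,P,T)$ is \rbped for every $P$, so $(M,T)$ is \grbped.

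Part~(b) is a routine verification. Since $T$ is $\bfk$-linear and $M$ is unital, $1_{A}\in C_{M}$; closure of $C_{M}$ under addition and under $\bfk$-scalars follows at once from additivity of the $A$-action and of $T$; and for $a,b\in C_{M}$ the chain $T((ab)\cdot m)=T(a\cdot(b\cdot m))=a\cdot T(b\cdot m)=a\cdot(b\cdot T(m))=(ab)\cdot T(m)$ shows $ab\in C_{M}$, so $C_{M}$ is a unital subalgebra of $A$. By the very definition of $C_{M}$, the operator $T$ commutes with the action of every element of $C_{M}$ on $M$, so $T$ is $C_{M}$-linear. For part~(c), it then suffices to observe that, by part~(b), $M$ is a unital left $C_{M}$-module and $T\colon M\to M$ is $C_{M}$-linear; hence part~(a) applies word for word with $A$ replaced by the unital algebra $C_{M}$, yielding the three stated equivalences.

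I do not expect a genuine obstacle here. The whole argument hinges on the single cancellation in part~(a), where $A$-linearity of $T$ reduces the Rota-Baxter paired identity~\eqref{eq:rbp} to the quasi-idempotency relation $T^{2}=-\lambda T$ and conversely; the only point deserving care is that $C_{M}$ inherits the identity element $1_{A}$, which is precisely what allows part~(c) to be deduced from part~(a) without a separate argument.
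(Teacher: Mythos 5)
Your proposal is correct and takes essentially the same route as the paper: in part~(a) the $A$-linearity of $T$ reduces Eq.~\eqref{eq:rbp} (for any fixed $P$) to $a\cdot\bigl(T^{2}(m)+\lambda T(m)\bigr)=0$, and unitality of the module gives $T^{2}=-\lambda T$, with part~(c) obtained by applying part~(a) to the unital subalgebra $C_{M}$ from part~(b). Your write-up is in fact slightly more explicit than the paper's (which only notes the equivalence and says the rest is similar), and you rightly flag the one point the paper leaves implicit, namely that $1_{A}\in C_{M}$ so the unitality argument still applies.
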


\begin{proof}
(\mref{it:3.2-1}) Under the $A$-linearity condition of $T$, for any given linear operator $P:A\to A$, Eq.~(\mref{eq:rbp}) is equivalent to $xT^2(m)+\lambda xT(m)=0$ for all $x\in A$ and $m\in M$, which is equivalent to $T^2(m)+\lambda T(m)=0$ for all $m\in M$ since $A$ is unitary. Thus we have ((\mref{it:3.2-1.1}) $\Longleftrightarrow$ (\mref{it:3.2-1.3})). In a similar way, we can prove that ((\mref{it:3.2-1.2}) $\Longleftrightarrow$ (\mref{it:3.2-1.3})).

\smallskip

\noindent
(\mref{it:3.2-2}) It is straightforward to check that $C_M$ is a subalgebra of $A$. Then the $C_M$-linearity of $T$ follows from Item~(\mref{it:3.2-1}) in replacing $A$ by $C_M$.
\smallskip

\noindent
(\mref{it:3.2-3}) follows from combining Items~ (\mref{it:3.2-1}) and (\mref{it:3.2-2}).
\end{proof}

See Section~\mref{sec:const} for application of this theorem to the constructions of \rbped modules from Hopf algebras.

We next generalize to \rbped modules the Atkinson factorization~\mcite{F.V. Atkinson} as presented in~\mcite{GL}.

\begin{theorem}\mlabel{thm:3.5} Let $M$ be an $A$-module and let $P: A\rightarrow A$, $T:M\rightarrow M$ be linear maps. Let $\lambda\in \bfk$ have no zero divisors in $M$ and define $\widetilde{T},\widetilde{P}$ as in Eq.(5). Then, $(M, P, T)$ is a \rbped $A$-module of weight $\lambda\neq 0$ if and only if, for any given $a\in A$ and $m\in M$, there is $n\in M$ such that
\begin{equation}
P(a)\cdot T(m)=T(n), \quad \widetilde{P}(a)\cdot \widetilde{T}(m)=-\widetilde{T}(n).
\mlabel{eq:atk}
\end{equation}
\end{theorem}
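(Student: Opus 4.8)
The plan is to imitate the classical Atkinson factorization argument, rewriting the Rota-Baxter paired identity \eqref{eq:rbp} so that the two "halves'' of the equation become visible. The key is the auxiliary operators $\widetilde P = -\lambda\,\mathrm{id}_A - P$ on $A$ and $\widetilde T = -\lambda\,\mathrm{id}_M - T$ on $M$ alluded to in the statement (``Eq.~(5)''). With these, the guiding identity is that \eqref{eq:rbp} should be equivalent to
\[
P(a)\cdot T(m) + \widetilde P(a)\cdot \widetilde T(m) = 0 \quad\text{after regrouping,}
\]
up to a term that can be absorbed by $T$. So the first step is a direct algebraic computation: expand $\widetilde P(a)\cdot\widetilde T(m) = (-\lambda a - P(a))\cdot(-\lambda m - T(m))$, collect terms, and compare with the right-hand side of \eqref{eq:rbp}. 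One expects to find
\[
P(a)\cdot T(m) = T\bigl(P(a)\cdot m + a\cdot T(m) + \lambda\, a\cdot m\bigr)
\iff
\widetilde P(a)\cdot\widetilde T(m) = -\,T\bigl(P(a)\cdot m + a\cdot T(m) + \lambda\, a\cdot m\bigr),
\]
because the ``$\lambda^2 a\cdot m$'', ``$\lambda P(a)\cdot m$'' and ``$\lambda a\cdot T(m)$'' cross terms in $\widetilde P(a)\cdot\widetilde T(m)$ reassemble into exactly $-\lambda$ times the bracketed expression plus $-\lambda\,(\text{stuff})$; one must check the bookkeeping carefully, but it is routine once $\widetilde T$ is pulled through the linear combination.

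With that identity in hand, the proof splits into the two implications. For the ``only if'' direction: assume $(M,P,T)$ is Rota-Baxter paired. Given $a\in A$, $m\in M$, set $n := P(a)\cdot m + a\cdot T(m) + \lambda\, a\cdot m \in M$. Then \eqref{eq:rbp} reads exactly $P(a)\cdot T(m) = T(n)$, which is the first half of \eqref{eq:atk}; and the computation of the previous paragraph gives $\widetilde P(a)\cdot\widetilde T(m) = -T(n)$, the second half. For the ``if'' direction: suppose that for every $a,m$ there is some $n$ with $P(a)\cdot T(m) = T(n)$ and $\widetilde P(a)\cdot\widetilde T(m) = -\widetilde T(n)$. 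Adding the two equations and using $T + \widetilde T = -\lambda\,\mathrm{id}_M$, the right-hand side becomes $T(n) - \widetilde T(n) = T(n) - (-\lambda n - T(n)) = 2T(n) + \lambda n$; hmm — more cleanly, one instead eliminates $n$: the first equation forces $T(n) = P(a)\cdot T(m)$, so the second becomes $\widetilde P(a)\cdot\widetilde T(m) = -\widetilde T(n) = \lambda n + T(n) = \lambda n + P(a)\cdot T(m)$, giving $\lambda n = \widetilde P(a)\cdot\widetilde T(m) - P(a)\cdot T(m)$. Expanding the left side via the definitions of $\widetilde P,\widetilde T$ expresses $\lambda n$ as an explicit element of $M$; since $\lambda$ has no zero divisors in $M$, this pins down $n$ uniquely, and substituting back into $P(a)\cdot T(m) = T(n)$ yields precisely \eqref{eq:rbp}.

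The main obstacle is bookkeeping rather than conceptual: one has to be disciplined about which identity is being used where, and the hypothesis ``$\lambda$ has no zero divisors in $M$'' is essential exactly at the point of recovering $n$ from $\lambda n$ in the ``if'' direction (without it, different choices of $n$ could satisfy the first equation of \eqref{eq:atk} yet fail \eqref{eq:rbp}). A secondary subtlety is to make sure the auxiliary computation linking $\widetilde P(a)\cdot\widetilde T(m)$ to $-T(\cdot)$ genuinely uses only $\bfk$-linearity of $T$ and the $A$-module structure — no Rota-Baxter hypothesis on $P$ alone — so that the equivalence is with \eqref{eq:rbp} and nothing more. Once these two points are handled the argument is a short symmetric manipulation, parallel to the module Atkinson factorization in~\mcite{GL}.
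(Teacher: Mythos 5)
Your proposal is correct and takes essentially the same approach as the paper: the forward direction sets $n=P(a)\cdot m+a\cdot T(m)+\lambda\,(a\cdot m)$ and checks the second identity by expanding $\widetilde{P}(a)\cdot\widetilde{T}(m)$ (the paper borrows this from the proof of Proposition~\ref{pp:3.1}), while the converse eliminates $n$ via $T+\widetilde{T}=-\lambda\,\mathrm{id}_M$ and the no-zero-divisor hypothesis on $\lambda$, exactly as in the paper. The only blemish is a notational slip: in your displayed ``guiding identity'' and once in the only-if direction you wrote $-T(n)$ where the correct (and clearly intended) expression is $-\widetilde{T}(n)=\lambda n+T(n)$, which you in fact use correctly in the converse.
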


\begin{proof}
Let $(M, P, T)$ be a \rbped $A$-module of weight $\lambda$. Then, for any $a\in A, m\in M$, we have
$$
P(a)\cdot T(m)=T(P(a)\cdot m)+T(a\cdot T(m))+\lambda T(a\cdot m).
$$
Let $n=P(a)\cdot m+a\cdot T(m)+\lambda (a\cdot m)$. Then the above equality gives
$$
P(a)\cdot T(m)=T(n). $$
From the proof of Proposition~\mref{pp:3.1}, we obtain
$$\widetilde{P}(a)\cdot \widetilde{T}(m)=-\widetilde{T}(n).$$

Now we consider the converse. For any $a\in A, m\in M$. Suppose that there exists $n\in M$ such that
$P(a)\cdot T(m)=T(n)$ and $\widetilde{P}(a)\cdot \widetilde{T}(m)=-\widetilde{T}(n)$. Then we have
 $$\begin{array}{rllr}
-\lambda n&=T(n)+\widetilde{T}(n)=P(a)\cdot T(m)-\widetilde{P}(a)\cdot \widetilde{T}(m)\\
 &=P(a)\cdot T(m)-((\lambda a+P(a))\cdot (\lambda m+T(m)))\\
 &=P(a)\cdot T(m)-(\lambda^2(a\cdot m)+\lambda(a\cdot T(m))+\lambda(P(a)\cdot m)+P(a)\cdot T(m))\\
 &=-\lambda(P(a)\cdot m+a\cdot T(m)+\lambda (a\cdot m)).
\end{array}$$
So we get
 $$\begin{array}{rllr}
n&=P(a)\cdot m+a\cdot T(m)+\lambda (a\cdot m),\\
P(a)\cdot T(m)&=T(n)=T(P(a)\cdot m)+T(a\cdot T(m))+\lambda T(a\cdot m),
\end{array}$$
 as needed.
 \end{proof}

\subsection{Other properties of Rota-Baxter paired modules}
\mlabel{ss:basic}
We now generalize basic properties of Rota-Baxter algebras and Rota-Baxter modules to \rbped modules.

Let $(M,P,T)$ be an \rbped $A$-module of weight $\lambda$. First it follows directly from the \rbped module axiom~\eqref{eq:rbp} that $T(M)$ is closed under the action of $P(A)$. Next for $\mu\in \bfk$, $(M, \mu P,\mu T)$ is a \rbped $A$-module of weight $\lambda\mu$. We also have the following generalization of Rota-Baxter algebras and Rota-Baxter modules.

\begin{prop}\mlabel{pp:2.5} Let $(M_i,P,T_i), i\in I,$ be a family of \rbped $A$-modules of weight $\lambda$. Define
$$ \mathbf{T}: \bigoplus_{i\in I} M_i \to \bigoplus_{i\in I}M_i, \quad
(m_i)\mapsto (T_i(m_i)) \ \ \tforall (m_i)\in \bigoplus_{i\in I}M_i.$$
Then $(\oplus_{i\in I} M_i, P, \mathbf{T})$ is a \rbped $A$-module of weight $\lambda$.
\end{prop}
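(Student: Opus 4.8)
The plan is to verify the defining identity~\eqref{eq:rbp} for the triple $(\oplus_{i\in I} M_i, P, \mathbf{T})$ by reducing it componentwise to the identities already known for each $(M_i, P, T_i)$. Since the $A$-module structure on $\bigoplus_{i\in I} M_i$ is the coordinatewise one, $a\cdot (m_i) = (a\cdot m_i)$, and since $\mathbf{T}$ is by definition the coordinatewise application of the $T_i$, every term appearing in~\eqref{eq:rbp} for the direct sum is itself a tuple whose $i$-th coordinate is the corresponding term of~\eqref{eq:rbp} for $M_i$.

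Concretely, I would fix $a\in A$ and $(m_i)\in \bigoplus_{i\in I} M_i$ and compute
$$
P(a)\cdot \mathbf{T}((m_i)) = P(a)\cdot (T_i(m_i)) = (P(a)\cdot T_i(m_i)),
$$
and on the other side
$$
\mathbf{T}(P(a)\cdot (m_i)) + \mathbf{T}(a\cdot \mathbf{T}((m_i))) + \lambda\, \mathbf{T}(a\cdot (m_i))
= \bigl(T_i(P(a)\cdot m_i) + T_i(a\cdot T_i(m_i)) + \lambda\, T_i(a\cdot m_i)\bigr).
$$
Because $(M_i,P,T_i)$ is a \rbped $A$-module of weight $\lambda$ for each $i$, the $i$-th coordinates of the two tuples agree, hence the tuples agree, which is exactly~\eqref{eq:rbp} for the direct sum. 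One should note in passing that $\mathbf{T}$ is indeed a well-defined $\bfk$-linear endomorphism of $\bigoplus_{i\in I} M_i$: a tuple $(m_i)$ has only finitely many nonzero entries, and since each $T_i$ is linear, $(T_i(m_i))$ again has finite support, so $\mathbf{T}$ lands in the direct sum.

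This argument is entirely routine and I do not expect a genuine obstacle; the only point requiring a word of care is the well-definedness of $\mathbf{T}$ on the direct sum (as opposed to the direct product), and the observation that the coordinatewise module structure makes both sides of~\eqref{eq:rbp} split across the index set $I$. If one wished, the same proof verbatim shows that the analogous statement holds for the direct product $\prod_{i\in I} M_i$ as well, since the componentwise verification never uses finiteness of support beyond ensuring $\mathbf{T}$ is defined.
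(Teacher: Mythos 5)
Your proposal is correct and follows the same route as the paper: a direct componentwise verification of Eq.~\eqref{eq:rbp} using the coordinatewise $A$-action and the definition of $\mathbf{T}$, applying the Rota-Baxter paired identity in each coordinate. Your additional remarks on well-definedness of $\mathbf{T}$ on the direct sum and on the direct product variant are sound but not needed beyond what the paper's argument already contains.
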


\begin{proof} For any $a\in A, (m_i)\in \oplus_{i\in I} M_i$, we have
 \begin{eqnarray*}
 && P(a)\cdot \mathbf{T}(m_i)=P(a)\cdot (T_i(m_i))=(P(a)\cdot T_i(m_i))\\
 &&=(T_i(P(a)\cdot m_i))+T_i(a\cdot T_i(m_i))+\lambda T_i(a\cdot m_i)\\
 &&=\mathbf{T}(((P(a)\cdot m_i)))+\mathbf{T}((a\cdot T_i(m_i)))+\lambda \mathbf{T}((a\cdot m_i))\\
 &&=\mathbf{T}(P(a)\cdot (m_i))+\mathbf{T}(a\cdot \mathbf{T}((m_i))+\lambda \mathbf{T}(a\cdot (m_i)).
 \end{eqnarray*}
Thus $(\oplus_{i\in I}M_i, P, \mathbf{T})$ is a Rota-Baxter $A$-module of weight $\lambda$.
\end{proof}

It follows that the category of \rbped $A$-modules of weight $\lambda$ matched with a fixed linear operator $P:A\to A$ is an abelian category.

\begin{prop}\mlabel{pp:3.1}
Let $(M,P,T)$ be a \rbped $A$-module of weight $\lambda$. Define
\begin{equation}
\widetilde{P}=-\lambda id-P,\ \ \widetilde{T}=-\lambda id-T.
\mlabel{eq:tilde}
\end{equation}
Then, $(M, \widetilde{P},\widetilde{T})$ is also a \rbped $A$-module of weight $\lambda$.
\end{prop}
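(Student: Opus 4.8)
The plan is to verify the Rota-Baxter paired module axiom~\eqref{eq:rbp} directly for the pair $(\widetilde{P},\widetilde{T})$, substituting the definitions $\widetilde{P}=-\lambda\,\id-P$ and $\widetilde{T}=-\lambda\,\id-T$ and reducing everything to the known identity for $(P,T)$. First I would compute the left-hand side $\widetilde{P}(a)\cdot\widetilde{T}(m) = (-\lambda a - P(a))\cdot(-\lambda m - T(m))$, expanding by bilinearity of the $A$-action into the four terms $\lambda^2(a\cdot m) + \lambda(a\cdot T(m)) + \lambda(P(a)\cdot m) + P(a)\cdot T(m)$. Then I would separately expand the right-hand side $\widetilde{T}(\widetilde{P}(a)\cdot m) + \widetilde{T}(a\cdot \widetilde{T}(m)) + \lambda\widetilde{T}(a\cdot m)$, using linearity of $\widetilde{T}$ and again bilinearity of the action to open up each argument.

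The key step is that after expanding the right-hand side, the term $\widetilde{T}(a\cdot\widetilde{T}(m))$ produces $\widetilde{T}(-\lambda(a\cdot m) - a\cdot T(m)) = \lambda^2(a\cdot m) + \lambda T(a\cdot m) + \lambda(a\cdot T(m)) + T(a\cdot T(m))$, and similarly $\widetilde{T}(\widetilde{P}(a)\cdot m)$ unfolds in terms of $P(a)\cdot m$ and $T(P(a)\cdot m)$. Collecting all contributions, the terms involving $T(P(a)\cdot m)$, $T(a\cdot T(m))$, and $\lambda T(a\cdot m)$ assemble into exactly the right-hand side of~\eqref{eq:rbp} for $(P,T)$, which by hypothesis equals $P(a)\cdot T(m)$; the remaining terms are precisely the polynomial-in-$\lambda$ terms $\lambda^2(a\cdot m) + \lambda(a\cdot T(m)) + \lambda(P(a)\cdot m)$ that I already identified on the left-hand side. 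So after substituting the hypothesis, both sides agree.

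I do not expect a genuine obstacle here — the statement is a bookkeeping identity of the same flavor as the classical fact that $\widetilde{P}=-\lambda\,\id - P$ is a Rota-Baxter operator whenever $P$ is. The only point requiring a little care is tracking signs and the $\lambda^2$ coefficients correctly when expanding the nested applications of $\widetilde{T}$, and making sure the $A$-action's bilinearity is invoked cleanly rather than ad hoc. An alternative, slightly slicker route would be to observe that the computation I do in the course of proving Theorem~\ref{thm:3.5} (the displayed chain showing $-\lambda n = -\lambda(P(a)\cdot m + a\cdot T(m) + \lambda(a\cdot m))$) already packages the relationship $\widetilde{T}(n) = -(\widetilde{P}(a)\cdot\widetilde{T}(m))$ when $T(n) = P(a)\cdot T(m)$; but since that theorem is stated afterward and its proof cites this proposition, I would present the self-contained direct expansion instead to avoid circularity.
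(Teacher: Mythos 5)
Your proposal is correct and follows essentially the same route as the paper: a direct expansion of the axiom for $(\widetilde{P},\widetilde{T})$ using bilinearity of the action and linearity of $\widetilde{T}$, followed by substitution of the identity~\eqref{eq:rbp} for $(P,T)$ to match the remaining terms (the paper expands the left-hand side and reassembles it into $\widetilde{T}$ applied to $\widetilde{P}(a)\cdot m+a\cdot\widetilde{T}(m)+\lambda(a\cdot m)$, while you expand both sides, which is the same computation read in the other direction). Your remark about avoiding circularity with Theorem~\mref{thm:3.5} is also well taken, since that theorem's proof cites this proposition.
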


\begin{proof}
Let $(M,P,T)$ be a \rbped $A$-module of weight $\lambda$. Then, for any $a\in A, m\in M$,
$$\begin{array}{rllr}
 \widetilde{P}(a)\cdot\widetilde{T}(m)&=(-\lambda id-P)(a)\cdot (-\lambda id-T)(m)\\
 &=(-\lambda a-P(a))\cdot (-\lambda m-T(m))\\
 &= \lambda a \cdot\lambda m+P(a) \cdot\lambda m+\lambda a \cdot T(m)+P(a) \cdot T(m)\\
 &=\lambda(\lambda (a \cdot m)+P(a) \cdot m+ a \cdot T(m))+T(P(a)\cdot m)\\
 &\ \ \ +T(a\cdot T(m))+ \lambda T(a\cdot m)\\
 &=(\lambda id+T)((a\cdot T(m)+P(a)\cdot m+\lambda (a\cdot m))\\
 &=\widetilde{T}(-a\cdot T(m)-P(a)\cdot m-\lambda (a\cdot m))\\
 &=\widetilde{T}((-\lambda a-P(a))\cdot m+a\cdot (-\lambda m-T(m))+\lambda (a\cdot m))\\
 &=\widetilde{T}(\widetilde{P}(a)\cdot m+a\cdot \widetilde{T}(m)+\lambda (a\cdot m))\\
 &=\widetilde{T}(\widetilde{P}(a)\cdot m)+\widetilde{T}(a\cdot \widetilde{T}(m))+\lambda \widetilde{T}(a\cdot m)).
\end{array}$$
This is what we need.
\end{proof}

The following result shows that a \rbped structure can be extended to the endomorphism ring.

\begin{prop}\mlabel{pp:2.3}
Let $(M,P,T)$ be a right \rbped $A$-module of weight $\lambda$. Consider the left $A$-module $\End(M)$ with the left $A$-action given by
$$(a\cdot f)(m)=f(m\cdot a) \ \  \tforall a\in A, m\in \End(M).$$
Define
$$\bfT: \End(M)\rightarrow \End(M), \quad \bfT(f)(m)=f(T(m)), \tforall f\in \End(M), m\in M, $$
and define $\widetilde{\bfT}$ as in Eq.~\eqref{eq:tilde}. Then $(\End(M),P,\widetilde{\bfT})$ is a left \rbped $A$-module of weight $\lambda$.
\end{prop}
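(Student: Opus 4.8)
The plan is to verify directly that the pair $(P,\widetilde{\bfT})$ satisfies the \rbped module axiom~\eqref{eq:rbp} for the left $A$-module $\End(M)$, by reducing the claim to the already-established Proposition~\mref{pp:3.1}. The key observation is that $\widetilde{\bfT} = -\lambda\,\id - \bfT$, so by Proposition~\mref{pp:3.1} it suffices to show that $(\End(M),P,\bfT)$ is itself a left \rbped $A$-module of weight $\lambda$; then applying the tilde construction to $(\End(M),P,\bfT)$ yields $(\End(M),\widetilde{P},\widetilde{\bfT})$ as a \rbped module, and one separately checks that $P$ (not just $\widetilde P$) also works. Actually the cleanest route is to prove the intermediate claim that $(\End(M), P, \bfT)$ is a left \rbped $A$-module and note that the statement as written pairs $P$ with $\widetilde\bfT$; so I would first observe that replacing $T$ by $\widetilde T$ on $M$ and $P$ by $\widetilde P$ on $A$ via Proposition~\mref{pp:3.1}, combined with the $\bfT \mapsto \widetilde\bfT$ passage, gives exactly the asserted pairing, provided the base computation goes through.

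First I would carry out the base computation: for $a \in A$, $f \in \End(M)$, and $m \in M$, unwind both sides of
$$
\bigl(P(a)\cdot \bfT(f)\bigr)(m) \overset{?}{=} \bigl(\bfT(P(a)\cdot f)\bigr)(m) + \bigl(\bfT(a\cdot \bfT(f))\bigr)(m) + \lambda\,\bigl(\bfT(a\cdot f)\bigr)(m)
$$
using the two defining formulas $(a\cdot f)(m) = f(m\cdot a)$ and $\bfT(f)(m) = f(T(m))$. The left side becomes $\bfT(f)(m\cdot P(a)) = f\bigl(T(m\cdot P(a))\bigr)$. On the right side, the three terms evaluate to $f\bigl(T(m)\cdot P(a)\bigr)$, $f\bigl(T(T(m)\cdot a)\bigr)$, and $\lambda f\bigl(T(m\cdot a)\bigr)$ respectively — here one must be careful about the order of $\bfT$ and the $A$-action, since $\bfT(a\cdot\bfT(f))(m) = (a\cdot \bfT(f))(T(m)) = \bfT(f)(T(m)\cdot a) = f\bigl(T(T(m)\cdot a)\bigr)$. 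So the desired identity reduces to
$$
f\bigl(T(m\cdot P(a))\bigr) = f\bigl(T(m)\cdot P(a)\bigr) + f\bigl(T(T(m)\cdot a)\bigr) + \lambda f\bigl(T(m\cdot a)\bigr),
$$
which holds because $f$ is linear and, setting $m' = m$ (an element of $M$) the bracketed arguments match the right \rbped module axiom~\eqref{eq:rbp} for $(M,P,T)$: $P(a)\cdot' T(m) = T(P(a)\cdot' m) + T(T(m)\cdot' a)+\lambda T(m\cdot' a)$ where $\cdot'$ denotes the right action. Strictly, I would first record the right-module form of~\eqref{eq:rbp}, namely $T(m)\cdot P(a) = T\bigl(m\cdot P(a)\bigr) + T\bigl(T(m)\cdot a\bigr) + \lambda T(m\cdot a)$, and then the computation above is just "apply $f$ to both sides."

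Having established that $(\End(M), P, \bfT)$ is a left \rbped $A$-module of weight $\lambda$, I then invoke Proposition~\mref{pp:3.1} with $(M,P,T)$ there replaced by $(\End(M), P, \bfT)$: it produces $(\End(M), \widetilde P, \widetilde\bfT)$ as a \rbped $A$-module. To reach the exact statement with $P$ rather than $\widetilde P$, note that the roles of $P$ and $\widetilde P$ are symmetric — $\widetilde{\widetilde P} = P$ — and that in the base computation above the only property of $P$ used was that it is \emph{some} linear endomorphism of $A$ appearing in a right \rbped structure on $M$; since $(M,\widetilde P,\widetilde T)$ is such a structure by Proposition~\mref{pp:3.1} applied to $M$, repeating the base computation with $(\widetilde P,\widetilde T)$ in place of $(P,T)$ shows $(\End(M),\widetilde P,\widetilde\bfT)$ is \rbped, and then one more application of Proposition~\mref{pp:3.1} (this time on $\End(M)$) flips $\widetilde P$ back to $P$ while fixing $\widetilde\bfT$, giving $(\End(M),P,\widetilde\bfT)$ as claimed. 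I do not anticipate a genuine obstacle here; the one place demanding care is bookkeeping the left-versus-right actions and the order in which $\bfT$ and the $A$-action are composed in the term $\bfT(a\cdot\bfT(f))$ — getting that wrong would misalign the identity with~\eqref{eq:rbp}, so that is the step I would double-check.
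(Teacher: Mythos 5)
Your strategy breaks down at the very first step, and the breakdown is exactly at the point you flagged as needing care. The intermediate claim that $(\End(M),P,\bfT)$ is a left \rbped $A$-module is false in general. In your base computation the last term is evaluated incorrectly: $\lambda\,\bfT(a\cdot f)(m)=\lambda\,(a\cdot f)(T(m))=\lambda f\bigl(T(m)\cdot a\bigr)$, not $\lambda f\bigl(T(m\cdot a)\bigr)$. With the correct value, the identity you would need is
$$f\bigl(T(m\cdot P(a))\bigr)=f\bigl(T(m)\cdot P(a)\bigr)+f\bigl(T(T(m)\cdot a)\bigr)+\lambda f\bigl(T(m)\cdot a\bigr),$$
whereas applying $f$ to the right-module form of Eq.~\eqref{eq:rbp} gives $f\bigl(T(m)\cdot P(a)\bigr)=f\bigl(T(m\cdot P(a))\bigr)+f\bigl(T(T(m)\cdot a)\bigr)+\lambda f\bigl(T(m\cdot a)\bigr)$; note that even your (mis)reduced identity has its first two terms transposed relative to this, so the "apply $f$ to both sides" step does not close either way. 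Subtracting the two displays shows the defect is $2T(T(m)\cdot a)+\lambda T(m)\cdot a+\lambda T(m\cdot a)$, which does not vanish in general: e.g.\ for weight $0$ with $A=M$ and $P=T$ the integration operator of Example~\mref{ex:2.4}, the defect is $2T(T(m)a)\neq 0$. The point of the proposition is precisely that $\bfT(f)=f\circ T$ reverses the order of composition, so the plain operator $\bfT$ does not inherit the identity; only the shifted operator $\widetilde{\bfT}=-\lambda\,\id-\bfT$ does, because the extra $-\lambda\,\id$ produces cross terms that compensate the reversal.

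The concluding maneuver is also not available: Proposition~\mref{pp:3.1} replaces \emph{both} operators by their tildes simultaneously, so applied to $(\End(M),\widetilde{P},\widetilde{\bfT})$ it returns $(\End(M),P,\bfT)$, not $(\End(M),P,\widetilde{\bfT})$; there is no way to "flip $\widetilde P$ back to $P$ while fixing $\widetilde{\bfT}$" with that proposition. The proof that works (and is the paper's) is the direct one: expand all four quantities $(P(a)\cdot\widetilde{\bfT}(f))(m)$, $\widetilde{\bfT}(P(a)\cdot f)(m)$, $\widetilde{\bfT}(a\cdot\widetilde{\bfT}(f))(m)$, $\lambda\widetilde{\bfT}(a\cdot f)(m)$ in terms of $f$, $T$ and the right action, cancel the $\lambda^2$ and $\lambda$ terms, and observe that what survives is exactly $f$ applied to the right-module Rota-Baxter identity $T(m)\cdot P(a)=T(m\cdot P(a))+T(T(m)\cdot a)+\lambda T(m\cdot a)$. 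If you rerun your computation with $\widetilde{\bfT}$ throughout and the corrected evaluation of $\bfT(a\cdot f)$, you will recover that argument.
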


\begin{proof}
For any $a\in A, f\in \End(M), m\in M,$ we have
\begin{equation}
\begin{array}{rllr}
(P(a)\cdot \widetilde{\bfT}(f))(m)&=\widetilde{\bfT}(f)(m\cdot P(a))=-\lambda f(m\cdot P(a))-f(T(m\cdot P(a))),\\
\widetilde{\bfT}(P(a)\cdot f)(m)&=-\lambda (P(a)\cdot f)(m)-(P(a)\cdot f)(T(m))\\
&=-\lambda f(m\cdot P(a)) -f(T(m)\cdot P(a)),\\
\widetilde{\bfT}(a\cdot \widetilde{\bfT}(f))(m)
&=-\lambda (a\cdot \widetilde{\bfT}(f))(m)-(a\cdot\widetilde{\bfT}(f))(T(m))\\ &=-\lambda \widetilde{\bfT}(f)(m\cdot a) - \widetilde{\bfT}(f)(T(m)\cdot a)\\
&=\lambda^2 f(m\cdot a)) + \lambda f(T(m\cdot a))+\lambda f(T(m)\cdot a)+
(f T)(T(m)\cdot a),\\
\lambda \widetilde{\bfT}(a\cdot f)(m)&= -\lambda^2 (a\cdot f)(m)-\lambda(a\cdot f)(T(m))\\
&= -\lambda^2 f(m\cdot a)- \lambda f(T(m)\cdot a ).
\end{array}
\mlabel{eq:tt}
\end{equation}
Sum over the last three equations and cancel terms on the right hand side. Applying Eq~\eqref{eq:rbp} for the right \rbped module $(M,P,T)$, we obtain
$$P(a)\cdot\widetilde{\bfT}(f)=\widetilde{\bfT}(P(a)\cdot f)+\widetilde{\bfT}(a\cdot\widetilde{\bfT}(f))+\lambda \widetilde{\bfT}(a\cdot f). $$
Thus ($\End (M),P,\widetilde{\bfT})$ is a \rbped $A$-module of weight $\lambda$.
\end{proof}

\begin{exam}\mlabel{ex:2.4}
\begin{enumerate}
 \item
 Suppose that $(A,P)$ is a \rbped algebra of weight $0$. Then by Example 2.2, we obtain a \rbped $A$-module $(A,P,P)$ of weight $0$. So by the above proposition, ($\End (A),P,T_P)$ is a \rbped $A$-module of weight $0$, where $T_P:$ $\End (A)\rightarrow$ $\End (A)$ is defined by
$$T_P(f)(a)=-f(P(a)),
$$
for $f\in$$\End (A)$ and $a\in A$.
\mlabel{it:2.4-1}
\item
 Let $A$ be the $\mathbb{R}$-algebra of continuous functions on $\mathbb{R}$. Define a map $P: A\rightarrow A$ by the integration
 $$
 P(f)(x)=\int_0^xf(t)dt.
 $$
Then $(A,P)$ is a Rota-Baxter algebra of weight $0$~\cite[Example~1.1.4]{L. Guo2012}. So by (\mref{it:2.4-1}), we see that ($\End (A),P,T_P)$ is a \rbped $A$-module of weight $0$, where $T_P:$ $\End (A)\rightarrow$ $\End (A)$ is defined by $$T_P(\alpha)(f)=-\alpha(P(f)) \ \ \tforall \alpha\in\End (A),f\in A,$$
 that is,
$$T_P(\alpha)(f)(x)=-\alpha\left(\int_0^xf(t)dt\right).$$
\end{enumerate}
\end{exam}

The following result generalizes a property of Rota-Baxter algebra proved in~\mcite{AM}.
\begin{prop}\mlabel{pp:3.3}
 Let $(M, P, T)$ be a \rbped $A$-module of weight $\lambda$.
\begin{enumerate}
\item If $T$ is idempotent, then
\begin{equation}
\mlabel{eq:3.3-1}(1+\lambda)T(a\cdot T(m))=0 \ \ \tforall a\in A, m\in M.
\end{equation}
\mlabel{it:3.3-1}
\item
If $P$ and $T$ are idempotent, then
\begin{equation}
\mlabel{eq:3.3-2}(1+\lambda)T(P(a)\cdot m)=0 \ \ \tforall a\in A, m\in M.
\end{equation}
\mlabel{it:3.3-2}
\item
If $P$ and $T$ are idempotent, then
\begin{equation}
\mlabel{eq:3.3-3} (1+\lambda)(P(a)\cdot T(m)-\lambda T(a\cdot m))=0 \ \ \tforall a\in A, m\in M.
\end{equation}
\mlabel{it:3.3-3}
\end{enumerate}
\end{prop}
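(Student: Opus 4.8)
The plan is to verify the three identities directly from the \rbped module axiom~\eqref{eq:rbp} together with the idempotency hypotheses $P^2=P$ and/or $T^2=T$, exploiting also the consequences already recorded in the paper (that $T(M)$ is stable under $P(A)$, and Proposition~\mref{pp:3.1} giving the companion \rbped structure $(\widetilde P,\widetilde T)$). All three parts are short computations of the kind already done in the proof of Proposition~\mref{pp:2.3}, so I will keep the sketch at the level of which substitution to make.

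For part~(\mref{it:3.3-1}): apply $T$ to both sides of~\eqref{eq:rbp}. On the left one gets $T(P(a)\cdot T(m))$; but $P(a)\cdot T(m)$ lies in $T(M)$ by the remark preceding Proposition~\mref{pp:2.5} (indeed $P(a)\cdot T(m) = T(n)$ with $n=P(a)\cdot m+a\cdot T(m)+\lambda(a\cdot m)$ as in the proof of Theorem~\mref{thm:3.5}), so idempotency of $T$ gives $T(P(a)\cdot T(m))=P(a)\cdot T(m)$. On the right, applying $T$ to $T(P(a)\cdot m)+T(a\cdot T(m))+\lambda T(a\cdot m)$ and using $T^2=T$ yields $T(P(a)\cdot m)+T(a\cdot T(m))+\lambda T(a\cdot m)$, which by~\eqref{eq:rbp} again equals $P(a)\cdot T(m)$. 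Comparing with the ``twice-applied'' version: instead, the cleanest route is to note $T$ of the RHS of~\eqref{eq:rbp} equals, term by term, $T^2(P(a)\cdot m)+T(a\cdot T^2(m))+\lambda T^2(a\cdot m)$ when I first rewrite the middle term using that $a\cdot T(m)$ need not be in $T(M)$ — so I will instead write~\eqref{eq:rbp} with $m$ replaced by $T(m)$ and subtract. Concretely, replacing $m$ by $T(m)$ in~\eqref{eq:rbp} and using $T^2=T$ gives $P(a)\cdot T(m)=T(P(a)\cdot T(m))+T(a\cdot T(m))+\lambda T(a\cdot T(m))$; since $P(a)\cdot T(m)=T(P(a)\cdot m)+T(a\cdot T(m))+\lambda T(a\cdot m)$ and $T(P(a)\cdot T(m))=P(a)\cdot T(m)$, subtracting yields $0=T(a\cdot T(m))+\lambda T(a\cdot T(m))-\lambda T(a\cdot m)+\lambda T(a\cdot m)$, i.e. $(1+\lambda)T(a\cdot T(m))=0$. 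I would lay this out as a two-line display.

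For part~(\mref{it:3.3-2}): now also $P^2=P$. Replace $a$ by $P(a)$ in~\eqref{eq:rbp}: using $P^2=P$ the left side is unchanged, $P(a)\cdot T(m)=T(P(a)\cdot m)+T(P(a)\cdot T(m))+\lambda T(P(a)\cdot m)$. But $P(a)\cdot T(m)=T(P(a)\cdot m)+T(a\cdot T(m))+\lambda T(a\cdot m)$ from the original axiom, and by part~(\mref{it:3.3-1}) the term $T(a\cdot T(m))$ contributes $-\lambda T(a\cdot T(m))$ when I want to eliminate it — more simply, $T(P(a)\cdot T(m))$ is itself of the form $(1+\lambda)T(P(a)\cdot T(m))=0$ by applying part~(\mref{it:3.3-1}) with $a$ replaced by $P(a)$ only if $1+\lambda$ is cancellable, which I do not assume. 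Instead I subtract the original~\eqref{eq:rbp} from its $a\mapsto P(a)$ version: $0 = \lambda T(P(a)\cdot m) + T(P(a)\cdot T(m)) - T(a\cdot T(m)) - \lambda T(a\cdot m)$, and then feed in part~(\mref{it:3.3-1}) (with $a$ and with $P(a)$) to collapse the $T(\cdot\,T(m))$ terms, landing on $(1+\lambda)T(P(a)\cdot m)=0$. Part~(\mref{it:3.3-3}) then follows immediately: from~\eqref{eq:rbp}, $P(a)\cdot T(m)-\lambda T(a\cdot m) = T(P(a)\cdot m)+T(a\cdot T(m))$, and multiplying by $(1+\lambda)$ kills both summands by parts~(\mref{it:3.3-1}) and~(\mref{it:3.3-2}).

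The only real subtlety — and the step I would be most careful about — is bookkeeping of the $(1+\lambda)$ factors: since $\bfk$ is an arbitrary commutative ring I must never divide by $1+\lambda$, so each identity has to be obtained by producing the factor $(1+\lambda)$ honestly from a subtraction of two instances of the axiom rather than by cancellation. Everything else is routine term-chasing analogous to the proof of Proposition~\mref{pp:2.3}. I would present the argument as three short labeled paragraphs, each a single aligned display followed by one sentence of justification, citing the remark before Proposition~\mref{pp:2.5} for the stability $P(A)\cdot T(M)\subseteq T(M)$ and citing the already-proved parts~(\mref{it:3.3-1}),~(\mref{it:3.3-2}) within~(\mref{it:3.3-2}),~(\mref{it:3.3-3}) respectively.
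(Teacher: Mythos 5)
Your parts (\mref{it:3.3-1}) and (\mref{it:3.3-3}) are correct and essentially the paper's own argument: for (\mref{it:3.3-1}) the paper likewise substitutes $m\mapsto T(m)$ in Eq.~\eqref{eq:rbp} (it then applies $T$ once more and uses $T^2=T$, whereas you invoke $P(A)\cdot T(M)\subseteq T(M)$ to replace $T(P(a)\cdot T(m))$ by $P(a)\cdot T(m)$ --- an equivalent half-line), and (\mref{it:3.3-3}) is exactly the paper's deduction from the first two parts after rearranging the axiom and multiplying by $(1+\lambda)$.

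The gap is in your part (\mref{it:3.3-2}). After subtracting the original axiom from its $a\mapsto P(a)$ instance you have
\begin{equation*}
0=T(P(a)\cdot T(m))+\lambda T(P(a)\cdot m)-T(a\cdot T(m))-\lambda T(a\cdot m),
\end{equation*}
and you propose to ``feed in part (\mref{it:3.3-1}) (with $a$ and with $P(a)$) to collapse the $T(\cdot\,T(m))$ terms.'' But those two terms occur with coefficient $1$, while part (\mref{it:3.3-1}) only says their $(1+\lambda)$-multiples vanish; by your own (correct) ground rule you may not cancel $1+\lambda$, and multiplying the displayed identity through by $(1+\lambda)$ yields only $\lambda(1+\lambda)\bigl(T(P(a)\cdot m)-T(a\cdot m)\bigr)=0$, not Eq.~\eqref{eq:3.3-2}. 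So the step as described fails. Two easy repairs: (i) the paper's route --- apply $T$ directly to the $a\mapsto P(a)$ instance $P(a)\cdot T(m)=T(P(a)\cdot m)+T(P(a)\cdot T(m))+\lambda T(P(a)\cdot m)$ and use $T^2=T$; the term $T(P(a)\cdot T(m))$ then appears on both sides and cancels exactly, giving $(1+\lambda)T(P(a)\cdot m)=0$ with no appeal to part (\mref{it:3.3-1}); or (ii) keep your subtraction, but replace $T(P(a)\cdot T(m))$ by $P(a)\cdot T(m)$ (the identity you already established inside your proof of part (\mref{it:3.3-1}), valid since $P(a)\cdot T(m)\in T(M)$ and $T^2=T$) and then substitute the original axiom for $P(a)\cdot T(m)$; the terms $T(a\cdot T(m))$ and $\lambda T(a\cdot m)$ cancel and $(1+\lambda)T(P(a)\cdot m)=0$ drops out.
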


\begin{proof}
(\mref{it:3.3-1})
Since $T^2=T$, from Eq.~\eqref{eq:rbp} we obtain
 $$\begin{array}{rllr}
P(a)\cdot T(m)&=P(a)\cdot T^2(m)=P(a)\cdot T(T(m))\\
&=T(P(a)\cdot T(m))+T(a\cdot T(T(m)))+\lambda T(a\cdot T(m))\\
&=T(P(a)\cdot T(m))+T(a\cdot T(m))+\lambda T(a\cdot T(m)).
\end{array}$$
Applying $T$ again and using $T^2=T$, we obtain
$$(1+\lambda)T(a\cdot T(m))=0.
$$

\noindent
(\mref{it:3.3-2})
Since $P^2=P$, we have
$$\begin{array}{rllr}
P(a)\cdot T(m)&=P^2(a)\cdot T(m)\\
 &=T(P^2(a)\cdot m)+T(P(a)\cdot T(m))+\lambda T(P(a)\cdot m)\\
 &=T(P(a)\cdot m)+T(P(a)\cdot T(m))+\lambda T(P(a)\cdot m).
\end{array}$$
Applying $T$ to the equation and using $T^2=T$, we obtain
 $$
 T(P(a)\cdot m)+\lambda T(P(a)\cdot m)=0. $$
This gives Eq.~\eqref{eq:3.3-2}.
\smallskip

\noindent
(\mref{it:3.3-3})
Eq.~\eqref{eq:3.3-3} follows from the first two equations.
 \end{proof}

We also have the following doubling property of \rbped modules generalized from Rota-Baxter algebras.

\begin{prop} Let $(A, P)$ be a Rota-Baxter algebra of weight $\lambda$, and $(M,P,T)$ a \rbped $A$-module of weight $\lambda$. Define another binary operation $\star$ on $A$ by
$$a\star b=aP(b)+P(a)b+\lambda ab,
$$
and another operation $\triangleright$ between $A$ and $M$ by $$a\triangleright m=
P(a)\cdot m+a\cdot T(m)+\lambda (a\cdot m),
$$
for $a,b\in A,m\in M.$
Then the following conclusions hold.
\begin{enumerate}
\item
$T(a\triangleright m)=P(a)\cdot T(m)$ for any $a\in A,m\in M.$
\mlabel{it:3.6-1}
\item
$M$ with the operation $\triangleright$ is a nonunitary $(A,\star)$-module.
\mlabel{it:3.6-2}
\item
$(M,P,T,\triangleright)$ is a (nonunitary) \rbped $A$-module of weight $\lambda$.
\mlabel{it:3.6-3}
\end{enumerate}
\mlabel{thm:3.6}
\end{prop}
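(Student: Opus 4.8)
The plan is to verify the three items in order, since each later item leans on the earlier ones. For item~(\mref{it:3.6-1}), I would simply expand $T(a\triangleright m)$ using the definition of $\triangleright$ and the linearity of $T$:
$$T(a\triangleright m)=T(P(a)\cdot m)+T(a\cdot T(m))+\lambda T(a\cdot m),$$
and this is exactly the right-hand side of the \rbped module axiom~\eqref{eq:rbp}, hence equals $P(a)\cdot T(m)$. This step is essentially a restatement of the defining identity and costs nothing.

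For item~(\mref{it:3.6-2}), I must check that $\triangleright$ makes $M$ a module over the (nonunitary) algebra $(A,\star)$; that is, $(a\star b)\triangleright m=a\triangleright(b\triangleright m)$ for all $a,b\in A$, $m\in M$. I would expand both sides. The left side uses the formula for $\star$ together with the definition of $\triangleright$ and the fact that $M$ is an ordinary $A$-module (so $(xy)\cdot m=x\cdot(y\cdot m)$). The right side requires substituting $b\triangleright m=P(b)\cdot m+b\cdot T(m)+\lambda(b\cdot m)$ into $a\triangleright(-)$, which introduces the term $T(b\triangleright m)$; here I invoke item~(\mref{it:3.6-1}) to rewrite $T(b\triangleright m)=P(b)\cdot T(m)$, and I also use the Rota-Baxter identity~\eqref{eq:rbo} for $(A,P)$ to handle the terms of the form $P(a)P(b)\cdot m$. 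Matching the two expansions term by term is the routine bookkeeping; the key inputs are \eqref{eq:rbo}, \eqref{eq:rbp}, and item~(\mref{it:3.6-1}). Because the construction of $\star$ and $\triangleright$ is precisely the standard ``double'' of a Rota-Baxter structure (the new product for which $P$ becomes an algebra morphism, up to sign, onto $(A,\cdot)$), everything is engineered to fit, so no genuine obstruction arises — only care with signs and with the weight-$\lambda$ terms.

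For item~(\mref{it:3.6-3}), I need to show $(M,P,T,\triangleright)$ again satisfies the \rbped axiom, now with the action $\triangleright$ in place of $\cdot$, i.e.
$$P(a)\triangleright T(m)=T(P(a)\triangleright m)+T(a\triangleright T(m))+\lambda T(a\triangleright m)\qquad\text{for all }a\in A,\ m\in M.$$
By item~(\mref{it:3.6-1}) applied with $\triangleright$, the right-hand side equals $T\big((a\star a')\cdots\big)$-type expressions; more directly, $T(a\triangleright m)=P(a)\cdot T(m)$, $T(a\triangleright T(m))=P(a)\cdot T^2(m)$, and $T(P(a)\triangleright m)=P(P(a))\cdot T(m)=P^2(a)\cdot T(m)$, while the left side $P(a)\triangleright T(m)=P(P(a))\cdot T(m)+P(a)\cdot T^2(m)+\lambda\big(P(a)\cdot T(m)\big)$. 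Collecting these, the claimed identity reduces to an automatic equality. I would present this by substituting the formula for $\triangleright$ on both sides, applying item~(\mref{it:3.6-1}) to each $T(\cdot\triangleright\cdot)$ occurrence, and observing that both sides equal $P^2(a)\cdot T(m)+P(a)\cdot T^2(m)+\lambda P(a)\cdot T(m)$.

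The main obstacle, such as it is, lies entirely in item~(\mref{it:3.6-2}): keeping track of the nine or so terms produced by expanding $(a\star b)\triangleright m$ and $a\triangleright(b\triangleright m)$ and confirming they cancel, which forces one genuine use of the Rota-Baxter identity~\eqref{eq:rbo} (the others follow formally from associativity of the $A$-action and from item~(\mref{it:3.6-1})). Items~(\mref{it:3.6-1}) and~(\mref{it:3.6-3}) are immediate consequences of~\eqref{eq:rbp} and of~(\mref{it:3.6-1}) respectively and require no new idea.
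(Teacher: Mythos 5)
Your proposal is correct and follows essentially the same route as the paper: item (a) is the \rbped axiom~\eqref{eq:rbp} restated, item (b) is the term-by-term expansion using item (a) together with the Rota--Baxter identity~\eqref{eq:rbo} to get $P(a)P(b)=P(a\star b)$, and item (c) reduces both sides to $P^2(a)\cdot T(m)+P(a)\cdot T^2(m)+\lambda P(a)\cdot T(m)$ exactly as in the paper. No substantive differences.
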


\begin{proof} (\mref{it:3.6-1}) It is direct to check
\begin{eqnarray}
\mlabel{eq:3.4}&P(a)\cdot T(m)=T(a\triangleright m).&
\end{eqnarray}

\noindent
(\mref{it:3.6-2}) We only need to prove that $(a\star b)\triangleright m=a\triangleright (b\triangleright m)$. Applying Eq.~\eqref{eq:3.4} and \cite[Theorem 1.1.17]{L. Guo2012}, we obtain
$$\begin{array}{rllr}
a\triangleright (b\triangleright m)&=a\triangleright (P(b)\cdot m+b\cdot T(m)+\lambda (b\cdot m))\\
 &=P(a)\cdot (P(b)\cdot m+b\cdot T(m)+\lambda (b\cdot m))+a\cdot \underbrace{T(P(b)\cdot m+b\cdot T(m)+\lambda (b\cdot m))}\\
&\ \ \ +\lambda (a\cdot (P(b)\cdot m+b\cdot T(m)+\lambda (b\cdot m)))\\
&=(P(a)P(b))\cdot m+(P(a)b)\cdot T(m)+\lambda(P(a)b)\cdot m+a\cdot T(b\triangleright m)+\lambda (aP(b))\cdot m\\
 &\ \ \ +(\lambda ab)\cdot T(m)+\lambda(\lambda ab)\cdot m\\
 &=(P(a)P(b))\cdot m+(P(a)b)\cdot T(m)+\lambda(P(a)b)\cdot m+(aP(b))\cdot T(m)+\lambda (aP(b))\cdot m\\
 &\ \ \ +(\lambda ab)\cdot T(m)+\lambda(\lambda ab)\cdot m\\
 &=(P(a)P(b))\cdot m+(\underbrace{P(a)b+aP(b)+\lambda ab})\cdot T(m)+\lambda((\underbrace{P(a)b+aP(b)+\lambda ab)\cdot m})\\
 &=P(a\star b)\cdot m+(a\star b)\cdot T(m)+\lambda((a\star b)\cdot m)\\
 &=(a\star b)\triangleright m.
\end{array}$$

\noindent
 (\mref{it:3.6-3}) By the definition of $\triangleright$ and Eq.(4) we have
 $$\begin{array}{rllr}
P(a)\triangleright T(m)&=P^2(a)\cdot T(m)+P(a)\cdot T^2(m)+\lambda(P(a)\cdot T(m))\\
 &=T(P(a)\triangleright m)+T(a\triangleright T(m))+\lambda T(a\triangleright m)
\end{array}$$
 as needed.
\end{proof}

\section{Constructions of \rbped modules}
\mlabel{sec:const}

In this section, we construct \rbped modules from Hopf algebras, weak Hopf algebras, Hopf modules, dimodules and weak Doi-Hopf modules, respectively.

We always work over a fixed field $\bfk$ and will use Sweedler's notation on coalgebras and comodules.
For a coalgebra $C$, we write its comultiplication
$\Delta(c)=c_{1}\otimes c_{2}$, for any $c\in C$; for a right (left)
$C$-comodule $M$, we denote its $C$-coaction by
$\rho(m)=m_{(0)}\otimes m_{(1)}$ ($\rho(m)=m_{(-1)}\otimes
m_{(0)}$), for any $m\in M$. Any unexplained definitions and
notations may be found in \cite{S. Montgomery1993} and \cite{M. Sweedler1969}.

\subsection{The construction on Hopf algebras}
\mlabel{ss:hopf}
In this subsection, we construct \grbped modules via Hopf algebras.

\begin{defn} A {\bf left action} of the bialgebra $H$ on the algebra $A$ is a
linear map
$$\alpha: H\otimes A\rightarrow A, \quad \alpha(h\otimes a)=h\cdot a\quad \tforall a,b\in A,\ h,g\in H,$$
such that

$(A1)\ h\cdot (ab)=(h_1\cdot a)(h_2\cdot b)$,

$(A2)\ 1_H\cdot a=a$,

$(A3)\ h\cdot(g\cdot a)=(hg)\cdot a$.

We will also call $A$ a {\bf left $H$-module algebra}.
\mlabel{de:1.4}
\end{defn}

As a direct consequence of Theorem~\mref{thm:3.2}, we demonstrate the following connection between the two meanings of an integral, one for a Hopf algebra and one for a Rota-Baxter algebra, both having its origin from integrals in analysis. See Example~\mref{ex:4.7} for the connection between cointegrals and \rbped modules.

Let $H$ be a bialgebra. If an element $x\in H$ satisfies $hx=\varepsilon(h)x$ for any $h\in H$, then we call $x$ a {\bf left integral} of $H$~\cite{M. Sweedler1969}.
%
Suppose that $H$ is a finite dimensional semisimple Hopf algebra. Then by \cite[Theorem 5.1.8]{M. Sweedler1969}, there exists a non-zero left integral $e$ such that $\varepsilon(e)=1$.

\begin{coro}
Let $H$ be a finite dimensional semisimple Hopf algebra. Let $A$ be a left $H$-module and define a map $T: A\rightarrow A$ by $T(a)=e\cdot a$. Then, $(A,T)$ is a \grbped $H$-module of  weight $-1$.
\mlabel{co:int}
\end{coro}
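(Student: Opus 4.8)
The plan is to derive this corollary directly from Theorem~\ref{thm:3.2}, part~(\ref{it:3.2-1}), by verifying its two hypotheses: that $T$ is $H$-linear and that $T$ is quasi-idempotent of weight $-1$, i.e. $T^2 = T$. Once both are in hand, the equivalence (\ref{it:3.2-1.1}) $\Longleftrightarrow$ (\ref{it:3.2-1.3}) of that theorem immediately gives that $(A,T)$ is a \grbped $H$-module of weight $-1$.

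First I would check $H$-linearity. For $h \in H$ and $a \in A$, using the module algebra structure (axiom (A3)) together with the left integral property $hx = \varepsilon(h)x$ is not quite what is needed here; rather, since $A$ is just a left $H$-module, I use associativity of the action: $h \cdot T(a) = h\cdot(e\cdot a) = (he)\cdot a$. On the other hand, $T(h\cdot a) = e\cdot(h\cdot a) = (eh)\cdot a$. So $H$-linearity of $T$ is \emph{not} automatic — it would require $he = eh$ for all $h$, which need not hold. The correct route is to observe that a left integral satisfies $he = \varepsilon(h)e$, hence $T(h\cdot a) = (eh)\cdot a$ while $h\cdot T(a) = (he)\cdot a = \varepsilon(h)(e\cdot a) = \varepsilon(h)T(a)$; these agree only after one also rewrites $eh$. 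So I expect the honest argument is: the map $T$ is $C_A$-linear in the sense of Theorem~\ref{thm:3.2}(\ref{it:3.2-2}), and in fact one should either (i) invoke part~(\ref{it:3.2-3}) of that theorem over $C_A$, or (ii) note that because $e$ is a \emph{two-sided} integral in a finite-dimensional semisimple Hopf algebra (semisimplicity forces the distinguished grouplike to be trivial, so $H$ is unimodular, and moreover cosemisimple, making $e$ central up to the standard normalization), we do have $he = eh = \varepsilon(h)e$, which yields $h\cdot T(a) = \varepsilon(h)(e\cdot a) = e\cdot(h\cdot a) = T(h\cdot a)$ using $\varepsilon(h)(e\cdot a) = (e\varepsilon(h))\cdot a = (eh)\cdot a$. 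Either way $T$ is $H$-linear.

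Second, I would verify $T^2 = T$. Compute $T^2(a) = e\cdot(e\cdot a) = (e^2)\cdot a$, and since $e$ is a left integral, $e^2 = \varepsilon(e)e = e$ because $\varepsilon(e) = 1$ by the choice of normalization guaranteed by \cite[Theorem 5.1.8]{M. Sweedler1969}. Hence $e \cdot (e \cdot a) = e \cdot a$, so $T^2 = T$, which is precisely quasi-idempotency of weight $\lambda = -1$ (the condition $T^2 = -\lambda T$).

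With $T$ shown to be $H$-linear and quasi-idempotent of weight $-1$, Theorem~\ref{thm:3.2}(\ref{it:3.2-1}) applies verbatim and yields that $(A,T)$ is a \grbped $H$-module of weight $-1$, completing the proof. The one genuine obstacle is the $H$-linearity step: it hinges on the fact that in a finite-dimensional semisimple Hopf algebra a left integral is automatically a two-sided (central, suitably normalized) integral, so $he = \varepsilon(h)e$; I would either cite this standard fact (unimodularity of semisimple Hopf algebras, plus the identification $\varepsilon(h)e = eh$) or, more cleanly, restrict the action to the subalgebra $C_A$ of Theorem~\ref{thm:3.2}(\ref{it:3.2-2}) and apply part~(\ref{it:3.2-3}) there, which needs only the $\bfk$-linear quasi-idempotency $e^2 = e$ already established.
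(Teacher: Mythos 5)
Your proposal is correct and takes essentially the same route as the paper: verify $T^2=T$ (via $e^2=\varepsilon(e)e=e$) and the $H$-linearity of $T$, then apply Theorem~\ref{thm:3.2}(\ref{it:3.2-1}). The only difference is that you explicitly justify $eh=\varepsilon(h)e$ by unimodularity of a finite-dimensional semisimple Hopf algebra, a fact the paper's computation $T(h\cdot a)=(eh)\cdot a=\varepsilon(h)e\cdot a$ uses silently; note that your fallback option (i), working over $C_A$ via Theorem~\ref{thm:3.2}(\ref{it:3.2-3}), would only yield a \grbped $C_A$-module, so the two-sided-integral argument (ii) is the one that actually proves the stated corollary.
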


\begin{proof}
As a matter of fact, for any $a\in A, h\in H$, we have
$$\begin{array}{rllr}
T^2(a)&=T(e\cdot a)=e^2\cdot a=\varepsilon(e)e\cdot a=e\cdot a
=T(a),\\
T(h\cdot a)&=(eh)\cdot a=\varepsilon(h)e\cdot a
=h\cdot T(a).
\end{array}$$
Thus by Theorem~\mref{thm:3.2},  $(A,T)$ is a \grbped $H$-module of weight $-1$.
\end{proof}

Note that $\im\, T=A^H=\{a\in A|h\cdot a=\varepsilon(h)a,$ for any $h\in H\}$, the invariant subalgebra of $A$~\cite{M. Cohen1986}.

\begin{exam}\mlabel{ex:2.5}
\begin{enumerate}
\item
Let $A$ be a left $H$-module algebra. The {\bf smash product} $A\#H$ of $A$ and $H$ is defined to be the module $A\otimes H$ with multiplication given by
$$
(a\#h)(b\#g)=a(h_1\cdot b)\#h_2g \quad \tforall a, b\in A, h, g\in H.
$$
Then $A\#H$ is an associative algebra with unit $1_A\#1_H$~\cite{M. Sweedler1969}. It is obvious that $A$ and $H$ are subalgebras of the smash product $A\#H$.

Assume that $A$ is a left $H$-module algebra. Then we have the smash product $A\# H$, which is a left $H$-module via its multiplication. Hence that $(A\# H,\overline{T})$ is a \grbped $H$-module of weight $-1$, where $\overline{T}: A\# H\rightarrow A\# H$ is given by
$$
\overline{T}(a\# h)=e_1\cdot a\# e_2h.
$$
\item
In particular, if $A=H$ and $H$ is a finite dimensional Hopf algebra, it is naturally considered as a left $H$-module algebra via the adjoint action $h\rightharpoonup x=h_1xS(h_2)$. So by the above discussion, $(H\# H,\overline{T})$ is a \grbped $H$-module of weight $-1$, where $\overline{T}: H\# H\rightarrow H\# H$ is given by
$$
\overline{T}(g\# h)=e_1gS(e_2)\# e_3h.
$$
\end{enumerate}
\end{exam}

\begin{prop} Let $H$ be a Hopf algebra. Equip $H^*$ with the convolution product. Define a left action of $H^*$ on $H$:
$$\rhd: H^*\ot H\to H, f\rhd h=h_1f(h_2)  \quad \tforall f\in H^\ast, h\in H.$$
Then $H$ is a left $H^\ast$-module. Let $\chi\in H^\ast$ and define a map $T$ on $H$ by
$$T(h)=\chi(h_1) h_2.
$$
Then, $(H,T)$ is a \grbped $H^\ast$-module of weight $-1$ if and only if $\chi^2=\chi$.
\mlabel{pp:4.1}
\end{prop}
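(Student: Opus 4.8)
The plan is to deduce this directly from Theorem~\mref{thm:3.2}(\mref{it:3.2-1}). First I would dispatch the two assertions in the statement that come before the ``if and only if'': that $H$ is a left $H^\ast$-module under $\rhd$, and that $T$ is $H^\ast$-linear. The module axioms are routine consequences of coassociativity and the counit axiom: $\varepsilon\rhd h = h_1\varepsilon(h_2)=h$, and for $f,g\in H^\ast$ one checks that both $f\rhd(g\rhd h)$ and $(f\ast g)\rhd h$ equal $h_1 f(h_2) g(h_3)$ (here $\ast$ is the convolution product on $H^\ast$). For the $H^\ast$-linearity of $T$, a one-line Sweedler computation gives, for all $f\in H^\ast$ and $h\in H$,
$$T(f\rhd h)=T(h_1)f(h_2)=\chi(h_1)h_2 f(h_3)=f\rhd\bigl(\chi(h_1)h_2\bigr)=f\rhd T(h),$$
again using coassociativity.

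Since $T$ is $H^\ast$-linear and $H$ is a left $H^\ast$-module, Theorem~\mref{thm:3.2}(\mref{it:3.2-1}) applies: $(H,T)$ is a \grbped $H^\ast$-module of weight $-1$ if and only if $T$ is quasi-idempotent of weight $-1$, that is, $T^2 = -(-1)T = T$. Thus the whole statement reduces to the claim $T^2=T \iff \chi^2=\chi$, where $\chi^2$ denotes the convolution square $\chi\ast\chi$. To settle this I would compute, again via coassociativity,
$$T^2(h)=T\bigl(\chi(h_1)h_2\bigr)=\chi(h_1)\chi(h_2)h_3=(\chi\ast\chi)(h_1)h_2,$$
and compare with $T(h)=\chi(h_1)h_2$. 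If $\chi\ast\chi=\chi$, then $T^2=T$ is immediate. Conversely, if $T^2=T$, then $(\chi\ast\chi)(h_1)h_2=\chi(h_1)h_2$ for all $h$; applying $\varepsilon$ to both sides and using $h_1\varepsilon(h_2)=h$ gives $(\chi\ast\chi)(h)=\chi(h)$ for all $h$, i.e. $\chi^2=\chi$.

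The proof is essentially a sequence of Sweedler-notation calculations together with an invocation of Theorem~\mref{thm:3.2}, so I do not anticipate a genuine obstacle. The only point requiring a little care is the backward implication $T^2=T\Rightarrow\chi^2=\chi$: one must apply the counit to strip off the surviving tensor leg rather than trying to ``cancel'' $h_2$ directly, and one must keep the Sweedler indices consistent when repeatedly applying coassociativity.
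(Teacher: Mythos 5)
Your proposal is correct and follows essentially the same route as the paper: verify that $T$ is a left $H^\ast$-module map, invoke Theorem~\ref{thm:3.2} to reduce the statement to $T^2=T$, and then compute $T^2(h)=(\chi\ast\chi)(h_1)h_2$ to get the equivalence with $\chi^2=\chi$. Your extra step of applying $\varepsilon$ in the converse direction is a sound (and slightly more explicit) justification of the paper's final assertion.
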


\begin{proof}
In fact, for any $f\in H^\ast, h\in H$,
$$\begin{array}{rllr}
f\rhd T(h)&=f\rhd(\chi(h_1)h_2)=\chi(h_1)f\rhd h_2\\
&=\chi(h_1)h_2  f(h_3) =T(h_1)  f(h_2) \\
&=T(f\rhd h).
\end{array}$$
So $T$ is a left $H^\ast$-module homomorphism. Thus by Theorem~\mref{thm:3.2}, $(H,T)$ is a \grbped $H^\ast$-module of weight $-1$ if and only if $T^2=T$.

It remains to prove that $T^2=T$ if and only if $\chi^2=\chi$. Assume that $\chi^2=\chi$. Then, we have
$$\begin{array}{rllr}
T^2(h)&=T(\chi(h_1)h_2)=\chi(h_1)\chi(h_2)h_3\\
&=\chi(h_1)h_2=T(h)
\end{array}$$
for any $h\in H$. Conversely, if $T^2=T$, then by the above expression,
$$\chi(h_1)\chi(h_2)h_3=\chi(h_1)h_2 \ \ \tforall h\in H.$$
This mean $\chi^2(h_1)h_2= \chi(h_1)h_2$ under the convolution product, that is, $\chi^2=\chi.$

Now the proof is completed.
 \end{proof}

\subsection{The construction via weak Hopf algebras}
\mlabel{ss:whopf}

In this subsection, we construct \rbped modules on module structures from weak bialgebras and quantum weak Hopf algebras. We first recall more backgrounds.

\begin{defn}\mlabel{de:1.6} Let $H$ be both an algebra and a coalgebra. Then $H$ is called a
{\bf weak bialgebra}~\cite{G. Bohm1999} if it satisfies the following conditions:
\begin{enumerate}
\item
$\Delta(xy)=\Delta(x)\Delta(y),$
\mlabel{it:1.6-1.1}
\item
$\varepsilon(xyz)=\varepsilon(xy_{1})\varepsilon(y_{2}z)
=\varepsilon(xy_{2})\varepsilon(y_{1}z),$
\mlabel{it:1.6-1.2}
\item
$\Delta^{2}(1_{H})=(\Delta(1_{H})\otimes1_{H})(1_{H}\otimes\Delta(1_{H}))=(1_{H}\otimes\Delta(1_{H}))(\Delta(1_{H})\otimes1_{H}),$
\\
for any $x,y,z\in H$, where $\Delta(1_{H})=1_{1}\otimes1_{2}$ and
$\Delta^{2}=(\Delta\otimes \id_{H})\circ\Delta$.
\mlabel{it:1.6-1.2}
\end{enumerate}
\end{defn}

For a weak bialgebra $H$, there are two linear maps
$\Pi^L,\Pi^R: H\rightarrow H$, called the {\bf target map} and {\bf source map}, respectively. They are defined by
$$\begin{array}{rllr}
\Pi^L(h)=\varepsilon(1_{1}h)1_{2},\quad
\Pi^R(h)=\varepsilon(h1_{2})1_{1} \ \ \tforall h\in H.
\end{array}$$
Their images will be denoted by $H^L$ and $H^R$, respectively.

By \cite{G. Bohm1999} the following equations hold for any $x,y\in H$.

$(W1)\ \ \Pi^L\circ\Pi^L=\Pi^L, \ \  \Pi^R\circ\Pi^R=\Pi^R,$

$(W2)\ \ \Pi^L(\Pi^L(x)y)=\Pi^L(x)\Pi^L(y), \ \  \Pi^R(x\Pi^R(y))=\Pi^R(x)\Pi^R(y),$

$(W3)\ \ \Pi^L(x)_1\otimes \Pi^L(x)_2=1_1\Pi^L(x)\otimes 1_2;\ \  \Pi^R(x)_1\otimes \Pi^R(x)_2=1_1\otimes \Pi^R(x)1_2,$

$(W4)\ \ \varepsilon(\Pi^R(x)y)=\varepsilon(xy)=\varepsilon(x\Pi^L(y)).$

\begin{prop}\mlabel{pp:4.3} Let $H$ be a weak bialgebra. Then $(H,\Pi^L)$ is a \grbped $H^L$-module of weight $-1$. Further $(H^L,\Pi^L)$ is a Rota-Baxter algebra of weight $-1$.
\end{prop}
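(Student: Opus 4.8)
The plan is to reduce everything to the already-proved Theorem~\mref{thm:3.2}. First I would observe that $H$ is naturally a left $H^L$-module: since $H^L\subseteq H$ is a subalgebra (by $(W2)$ together with $(W1)$, $H^L=\Pi^L(H)$ is closed under multiplication, and $1_H\in H^L$ because $\Pi^L(1_H)=\varepsilon(1_1)1_2=1_H$), the left regular action of $H$ on itself restricts to an action of $H^L$ on $H$. So the first step is to verify that $\Pi^L$ is $H^L$-linear, i.e. that $\Pi^L(xh)=x\Pi^L(h)$ for all $x\in H^L$, $h\in H$. By $(W3)$ we have $\Pi^L(x)_1\otimes\Pi^L(x)_2=1_1\Pi^L(x)\otimes 1_2$; writing $x=\Pi^L(x)$ and using the definition $\Pi^L(xh)=\varepsilon(1_1xh)1_2$, this structural fact about elements of $H^L$ (essentially that they ``absorb'' into the unit on the left) is what forces the identity. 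I would either cite the standard computation from \cite{G. Bohm1999} or carry it out in two lines using $(W3)$ and the counit axiom.

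Next, $\Pi^L$ is idempotent by $(W1)$, hence quasi-idempotent of weight $-1$: $(\Pi^L)^2=\Pi^L=-(-1)\Pi^L$. Combining $H^L$-linearity with quasi-idempotency, Theorem~\mref{thm:3.2}\,(\mref{it:3.2-1}) applied with $A=H^L$, $M=H$, $T=\Pi^L$ immediately gives that $(H,\Pi^L)$ is a \grbped $H^L$-module of weight $-1$. This is the bulk of the proposition and requires no new ideas beyond the linearity check.

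For the second assertion, I would take $A=H^L$ as an algebra and $P=\Pi^L|_{H^L}=\id_{H^L}$? No --- that is not right: $\Pi^L$ restricted to $H^L$ is the identity by $(W1)$, so I should instead view $H^L$ as a module over itself and note that $(H^L,\Pi^L,\Pi^L)$ being a \rbped module is exactly the statement that $(H^L,\Pi^L)$ is a Rota-Baxter algebra, by Example~\mref{ex:2.2}(1) read in reverse. Concretely, the Rota-Baxter identity for $\Pi^L$ on $H^L$ reads $\Pi^L(x)\Pi^L(y)=\Pi^L(\Pi^L(x)y)+\Pi^L(x\Pi^L(y))-\Pi^L(xy)$; on $H^L$ we have $\Pi^L(x)=x$, $\Pi^L(y)=y$, so the right side collapses to $\Pi^L(xy)+\Pi^L(xy)-\Pi^L(xy)=\Pi^L(xy)=xy$ (using $(W2)$ or simply $H^L$ being a subalgebra so $xy\in H^L$), matching the left side $xy$. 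So the identity holds trivially. Alternatively, and more cleanly, I can just invoke the first part together with Theorem~\mref{thm:3.2}\,(\mref{it:3.2-3}): taking $M=H^L$ with its $H^L$-module structure, the operator $\Pi^L$ is $C_M$-linear and quasi-idempotent, and restricting attention to the algebra acting shows $(H^L,\Pi^L,\Pi^L)$ is \rbped, which by Example~\mref{ex:2.2} means $(H^L,\Pi^L)$ is a Rota-Baxter algebra of weight $-1$.

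The only real obstacle is the $H^L$-linearity of $\Pi^L$; everything else is formal, riding on Theorem~\mref{thm:3.2} and properties $(W1)$--$(W4)$. I expect that step to be a short but slightly delicate Sweedler-notation manipulation using $(W3)$, and I would present it carefully rather than waving at it.
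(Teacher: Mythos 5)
Your proposal is correct and follows essentially the same route as the paper: both verify that $\Pi^L$ is idempotent (by (W1)) and $H^L$-linear, invoke Theorem~\mref{thm:3.2} with $A=H^L$, $M=H$, $T=\Pi^L$ to get the \grbped module, and then obtain the Rota-Baxter algebra statement by specializing to $H^L$, where $\Pi^L$ restricts to the identity and the paired-module identity becomes the Rota-Baxter identity. The only step you treat as delicate, the $H^L$-linearity $\Pi^L(ah)=a\,\Pi^L(h)$ for $a\in H^L$, is literally the first identity in (W2) applied with $a=\Pi^L(x)$, so no (W3)/counit computation is needed.
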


\begin{proof}
First $H$ is a left $H^L$-module via the multiplication of $H$. By (W1) and (W2),   $\Pi^L: H\rightarrow H$ is idempotent and is a left $H^L$-module map. Thus by Theorem~\mref{thm:3.2}, $(H,\Pi^L)$ is a \grbped $H^L$-module of weight $-1$. Hence $(H^L,\Pi^L)$ is a \grbped $H^L$-module of weight $-1$.
In particular, $(H^L,\Pi^L,\Pi^L)$ is a \rbped $H^L$-module of weight $-1$. This means that $(H^L,\Pi^L)$ is a Rota-Baxter algebra of weight $-1$.
\end{proof}

Let $H$ be a weak bialgebra. If a linear map $S: H\rightarrow H$ satisfies the conditions:
$$
x_1S(x_2)=\Pi^L(x),\ \ S(x_1)x_2=\Pi^R(x),\ \ S(x_1)x_2S(x_3)=S(x)\ \
\tforall x\in H,
$$
then $S$ is called an {\bf antipode} of $H$ and $H$ is called a {\bf weak Hopf algebra}.
If so, then by \cite{Z.W. Wang2017}, the following equalities hold:

$(W5)\ \ \Pi^L\circ\Pi^R=\Pi^L\circ S=S\circ\Pi^R,\ \ \Pi^R\circ\Pi^L=\Pi^R\circ S=S\circ\Pi^L,$

$(W6)\ \ x_1\otimes \Pi^R(x_2)=x1_1\otimes S(1_2);\ \  \Pi^L(x_1)\otimes x_2=S(1_1)\otimes 1_2x,$ for any $x\in H.$

A weak Hopf algebra $H$ is called {\bf quantum commutative} if $h_1g\Pi^R(h_2)=hg$ for any $h,g\in H$. By~\cite[Proposition 4.1]{D. Bagio2015}, $H$ is quantum commutative if and only if $H^R\subseteq C(H)$.

\begin{prop} Let $H$ be a quantum commutative weak Hopf algebra with antipode $S$. Define an adjoint action of $H$ on $H$ by
\begin{center}
$h\rightharpoonup x=h_1xS(h_2)$ \ \ \ for all $h,x\in H$.
\end{center}
Then $(H,\Pi^L,\Pi^L)$ is a \rbped $H$-module of weight $-1$.
\mlabel{pp:4.4}
\end{prop}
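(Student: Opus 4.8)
The plan is to verify the \rbped module axiom~\eqref{eq:rbp} directly for the triple $(H,\Pi^L,\Pi^L)$ with the adjoint action, exploiting quantum commutativity to reduce the left-hand side. Since the weight is $-1$, the identity to establish reads
\begin{equation*}
\Pi^L(h)\rightharpoonup \Pi^L(x) = \Pi^L(\Pi^L(h)\rightharpoonup x) + \Pi^L(h\rightharpoonup \Pi^L(x)) - \Pi^L(h\rightharpoonup x)
\end{equation*}
for all $h,x\in H$. First I would unwind the left-hand side: $\Pi^L(h)\rightharpoonup \Pi^L(x) = \Pi^L(h)_1\,\Pi^L(x)\,S(\Pi^L(h)_2)$, and then apply (W3), which gives $\Pi^L(h)_1\otimes\Pi^L(h)_2 = 1_1\Pi^L(h)\otimes 1_2$, together with (W5) to rewrite $S$ on the relevant piece. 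The key simplification I expect is that quantum commutativity ($h_1 g\Pi^R(h_2)=hg$, equivalently $H^R\subseteq C(H)$) collapses the adjoint action on elements of $H^L$ or forces $\Pi^L(h)\rightharpoonup(-)$ to behave like multiplication by $\Pi^L(h)$ up to applying $\Pi^L$, analogous to how Proposition~\mref{pp:4.3} used (W1)--(W2).

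Second, I would treat the three terms on the right. For $\Pi^L(h\rightharpoonup\Pi^L(x))$ I would use (W4) — namely $\varepsilon(x\Pi^L(y))=\varepsilon(xy)$ — and property (W2) to absorb the inner $\Pi^L$; the hope is that $h\rightharpoonup\Pi^L(x)$ and $h\rightharpoonup x$ have the same image under $\Pi^L$ when combined with the other terms, or more precisely that $\Pi^L(h\rightharpoonup\Pi^L(x)) - \Pi^L(h\rightharpoonup x)$ telescopes against $\Pi^L(\Pi^L(h)\rightharpoonup x)$. For the term $\Pi^L(\Pi^L(h)\rightharpoonup x)$, I would again use (W3) to expand $\Pi^L(h)_1\otimes\Pi^L(h)_2$ and then (W5)/(W6) to handle $S(1_2)$-type factors, reducing everything to expressions in $\varepsilon$, $1_1$, $1_2$ and the multiplication of $H$.

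The main obstacle I anticipate is the bookkeeping of the coproduct of $1_H$ and the interplay of the three Sweedler legs in $h_1 x S(h_2)$ once $\Pi^L$ is applied on the outside: the nonunital-coalgebra subtleties of weak bialgebras (the fact that $\varepsilon$ is not an algebra map and $\Delta(1_H)\neq 1_H\otimes 1_H$) make naive cancellations fail, so one must be disciplined about when to invoke (W3), (W4) and the weak-bialgebra axiom~(ii) $\varepsilon(xyz)=\varepsilon(xy_1)\varepsilon(y_2 z)$. I would expect the cleanest route to be: (1) show $\Pi^L(h)\rightharpoonup\Pi^L(x) = \Pi^L(\Pi^L(h)x)$ using (W3), (W5) and quantum commutativity to kill the antipode leg; (2) show $\Pi^L(h\rightharpoonup \Pi^L(x)) = \Pi^L(hx)$ and $\Pi^L(h\rightharpoonup x) = \Pi^L(hx)$ again via quantum commutativity (so these two cancel); (3) show $\Pi^L(\Pi^L(h)\rightharpoonup x) = \Pi^L(\Pi^L(h)x)$; and then the required identity reduces to the trivial $\Pi^L(\Pi^L(h)x) = \Pi^L(\Pi^L(h)x) + \Pi^L(hx) - \Pi^L(hx)$. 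Steps (1)--(3) are exactly where quantum commutativity is essential, and verifying them carefully is the crux; the rest is formal.
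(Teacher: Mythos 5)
Your plan is essentially the paper's proof: it establishes $\Pi^L(g\rightharpoonup x)=\Pi^L(gx)$ (the step where quantum commutativity enters, via $1_1\in H^R\subseteq C(H)$ to move $h$ past the extra copy of $\Delta(1)$), deduces that the two middle terms on the right cancel, and identifies the left-hand side with $\Pi^L(\Pi^L(h)x)=\Pi^L(h)\Pi^L(x)$, exactly the paper's steps. The only point you omit is checking that the adjoint action actually makes $H$ a unital left $H$-module --- in the weak setting $1\rightharpoonup x=1_1xS(1_2)=x$ is not automatic and itself requires (W6) together with quantum commutativity --- which the paper verifies first.
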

Note that $(H,\Pi^L)$ is not a Rota-Baxter algebra.

\begin{proof} We first verify that $H$ is a left $H$-module under the adjoint action. In fact, for $h,g,x\in H$,
$$\begin{array}{rllr}
1\rightharpoonup x&=1_1xS(1_2)\stackrel{(W6)}=1_1x\Pi^R(1_2)=x,\\
h\rightharpoonup(g\rightharpoonup x)&=h_1g_1xS(g_2)S(h_2)\\
&=h_1g_1xS(h_2g_2)=(hg)\rightharpoonup x.
\end{array}$$

We next verify
\begin{equation}
\Pi^L(h\rightharpoonup x)=\Pi^L(h\rightharpoonup \Pi^L(x)) \ \ \tforall h,x\in H.
\mlabel{eq:pi1}
\end{equation}

This is so since for the left hand side, we have
$$\begin{array}{rllr}
\Pi^L(h\rightharpoonup x)&=\varepsilon(1_1h_1xS(h_2))1_2\\
&\stackrel{(W4)}=\varepsilon(\Pi^R(1_1h_1)x\Pi^L(S(h_2)))1_2\\
&\stackrel{(W5)}=\varepsilon(\Pi^R(1_1h_1)x\Pi^L(\Pi^R(h_2)))1_2\\
&\stackrel{(W6)}=\varepsilon(\Pi^R(1_1h1^\prime_1)x\Pi^L(S(1^\prime_2)))1_2\\
&=\varepsilon(1_1h1^\prime_1xS(1^\prime_2))1_2\\
&=\varepsilon(1_11^\prime_1hxS(1^\prime_2))1_2\\
&=\varepsilon(1_1hx)1_2\\
&=\Pi^L(hx),
\end{array}$$
where $\Delta(1)=1^\prime_1\otimes 1^\prime_2.$

For the right hand side, by~\cite[Corollary 2.2]{L.Y. Zhang2010}, $h\rightharpoonup \Pi^L(x)=h_1\Pi^L(x)S(h_2)=\Pi^L(hx)$ for $h,g,x\in H$. So by $(W1)$, we obtain $$\Pi^L(h\rightharpoonup \Pi^L(x))=\Pi^L(hx). $$
This proves Eq.~\eqref{eq:pi1}.

Further we have, for any $h,x\in H$,
$$\begin{array}{rllr}
\Pi^L(h)\rightharpoonup\Pi^L(x)&=\Pi^L(h)_1\Pi^L(x)S(\Pi^L(h)_2)\\
&\stackrel{(W3)}=1_1\Pi^L(h)\Pi^L(x)S(1_2)\\
&=\Pi^L(h)\Pi^L(x),\\
\Pi^L(\Pi^L(h)\rightharpoonup x)&=\Pi^L(\Pi^L(h)x)\stackrel{(W2)}=\Pi^L(h)\Pi^L(x),
\end{array}$$
yielding $\Pi^L(h)\rightharpoonup\Pi^L(x)=\Pi^L(\Pi^L(h)\rightharpoonup x)$.

Therefore we obtain
$$
\Pi^L(h)\rightharpoonup\Pi^L(x)=\Pi^L(\Pi^L(h)\rightharpoonup x)+\Pi^L(h\rightharpoonup\Pi^L(x))-\Pi^L(h\rightharpoonup x),
$$
that is, $(H,\Pi^L,\Pi^L)$ is a \rbped $H$-module of weight $-1$.
\end{proof}

\subsection{The construction on Hopf modules}
\mlabel{ss:hopfmod}

In this subsection, we construct \rbped modules via an antipode $S$ of the Hopf algebra $H$.

\begin{prop} Let $H$ be a Hopf algebra with an antipode $S$, and $M$ a right $H$-Hopf-module. Define a map $E_M: M\rightarrow M$ given by
$$
E_M(m)=m_{(0)}\cdot S(m_{(1)}) \quad \tforall m\in M.
$$
Then, the following statements hold.
\begin{enumerate}
\item Define
$$\widetilde{\varepsilon}_H: H\rightarrow H, \ \ \widetilde{\varepsilon}_H(h)=\varepsilon_H(h)1_H.
$$
Then $(M,\widetilde{\varepsilon}_H,E_M)$ is a right \rbped $H$-module of weight $-1$.
\mlabel{it:4.5-1}
\item
Define a left action of $H^*$ on $M$ by $f\rightharpoonup m=m_{(0)}f(m_{(1)})$ for $f\in H^\ast, m\in M$. Then $(M,E_M)$ is a \grbped $H^\ast$-module of weight $-1$.
\mlabel{it:4.5-2}
\end{enumerate}
\mlabel{pp:4.5}
\end{prop}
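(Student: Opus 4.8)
The plan is to treat the two items separately, but both reduce to showing that $E_M$ is quasi-idempotent of weight $-1$, i.e. $E_M^2 = E_M$, together with a suitable linearity statement. The essential Hopf-module fact I want to exploit is the Fundamental Theorem of Hopf modules, whose core identity says that for the coaction $\rho(m) = m_{(0)} \otimes m_{(1)}$ one has $m_{(0)}\cdot S(m_{(1)})$ lying in the space of coinvariants $M^{\mathrm{co}H}$, and that $E_M$ is the projection onto $M^{\mathrm{co}H}$. Concretely, the computations I would carry out are:
\begin{enumerate}
\item
$E_M$ is idempotent: using that for a right $H$-Hopf module the coaction is a module map and compatible with $\Delta$, we get $\rho(E_M(m)) = E_M(m)\otimes 1_H$ (coinvariance), so $E_M^2(m) = E_M(m)_{(0)}\cdot S(E_M(m)_{(1)}) = E_M(m)\cdot S(1_H) = E_M(m)$.
\item
$E_M$ interacts with the relevant action in the ``module-map'' way. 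For item~(\mref{it:4.5-1}) the algebra is $H$ acting on the right by the Hopf-module action, and I would check the identity $E_M(m\cdot h)$ against $E_M(m)\cdot \widetilde{\varepsilon}_H(h)$ etc.; for item~(\mref{it:4.5-2}) the algebra is $H^\ast$ with convolution, acting by $f\rightharpoonup m = m_{(0)}f(m_{(1)})$, and I would verify $E_M(f\rightharpoonup m) = f\rightharpoonup E_M(m)$ using coinvariance of $E_M(m)$ (so that $f$ only sees $1_H$, giving $f(1_H)E_M(m)$) versus the left side, which unwinds via the comodule axiom and the antipode identity $h_1 S(h_2) = \varepsilon(h)1_H$.
\end{enumerate}

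For item~(\mref{it:4.5-2}) this is then immediate from Theorem~\mref{thm:3.2}(\mref{it:3.2-1}): once $E_M$ is $H^\ast$-linear and quasi-idempotent of weight $-1$, $(M,E_M)$ is a \grbped $H^\ast$-module of weight $-1$. For item~(\mref{it:4.5-1}) the operator $P$ is pinned down to be $\widetilde{\varepsilon}_H$, so this is not a \emph{generic} statement and I cannot invoke Theorem~\mref{thm:3.2} directly; instead I would verify the \rbped axiom~\eqref{eq:rbp} for the \emph{right} module $(M, \widetilde{\varepsilon}_H, E_M)$ by hand. The right-module version of~\eqref{eq:rbp} reads $\widetilde{T}$-style as $E_M(m)\cdot \widetilde{\varepsilon}_H(h) = E_M(m\cdot \widetilde{\varepsilon}_H(h)) + E_M(E_M(m)\cdot h) - E_M(m\cdot h)$. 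Each of the three terms on the right I would compute using (a) $\widetilde{\varepsilon}_H(h) = \varepsilon(h)1_H$ is central-ish and kills to scalars, (b) coinvariance $\rho(E_M(m)) = E_M(m)\otimes 1_H$ so that $E_M(E_M(m)\cdot h) = E_M(m)\cdot h_{(1)} S(h_{(2)}) \cdot (\text{stuff})$ collapses via $h_1 S(h_2) = \varepsilon(h)1$, and (c) the Hopf-module compatibility $\rho(m\cdot h) = m_{(0)}h_1 \otimes m_{(1)}h_2$ to expand $E_M(m\cdot h)$. After substitution the identity should reduce to a Sweedler-notation tautology.

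The main obstacle I anticipate is bookkeeping in step~(c): expanding $E_M(m\cdot h) = m_{(0)}h_1 \cdot S(m_{(1)}h_2)$ and using anti-multiplicativity of $S$, namely $S(m_{(1)}h_2) = S(h_2)S(m_{(1)})$, then repeatedly applying $h_1 S(h_2) = \varepsilon(h)1_H$ and the comodule coassociativity to show the three right-hand terms telescope to $\varepsilon(h)\,E_M(m) = E_M(m)\cdot\widetilde{\varepsilon}_H(h)$. I would be careful to use the \emph{right} Hopf-module axioms consistently (coaction is a right $H$-module map and a right $H$-comodule map), and to note that $\widetilde{\varepsilon}_H$ need not be a Rota-Baxter operator on $H$ — the pairing with $E_M$ is what makes the axiom hold — which is why this is genuinely a \rbped statement rather than a statement about Rota-Baxter algebras.
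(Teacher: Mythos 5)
For item~(\mref{it:4.5-1}) and for the idempotency of $E_M$, your plan is correct and is essentially the paper's own argument: one first checks $\rho(E_M(m))=E_M(m)\otimes 1_H$ and the key identity $E_M(m\cdot h)=\varepsilon_H(h)E_M(m)$ (anti-multiplicativity of $S$ plus $h_1S(h_2)=\varepsilon_H(h)1_H$), after which every term in the right-module version of Eq.~\eqref{eq:rbp} collapses to $\varepsilon_H(h)E_M(m)$ and the axiom is a one-line check.

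The gap is in item~(\mref{it:4.5-2}). The identity you intend to verify, $E_M(f\rightharpoonup m)=f\rightharpoonup E_M(m)$, is false in general, so the appeal to Theorem~\mref{thm:3.2} breaks down. Already for the regular Hopf module $M=H$ one has $E_H(h)=h_1S(h_2)=\varepsilon_H(h)1_H$, hence $E_H(f\rightharpoonup h)=f(h)1_H$ while $f\rightharpoonup E_H(h)=\varepsilon_H(h)f(1_H)1_H$; taking $H=\bfk G$ with $G\neq\{e\}$, $h=g\neq e$ and $f$ the coordinate functional at $g$ gives $1_H\neq 0$. This is not a bookkeeping issue: a \grbped structure must in particular satisfy Eq.~\eqref{eq:rbp} for $P=0$, which demands $E_M(f\rightharpoonup m)=E_M(f\rightharpoonup E_M(m))=f(1_H)E_M(m)$, and this fails in the same example; more generally, letting $P$ vary forces $H^\ast$-linearity of $E_M$, so the generic claim cannot hold for all Hopf modules. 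Be aware that the paper's own proof of this item has the same defect: its displayed computation only establishes $f\rightharpoonup E_M(m)=f(1_H)E_M(m)=E_M(f\rightharpoonup E_M(m))$, which is not $H^\ast$-linearity. What that computation does prove is exactly the \rbped axiom with $P=\id_{H^\ast}$, because the two terms $\pm E_M(f\rightharpoonup m)$ then cancel; so the statement that survives is that $(M,\id_{H^\ast},E_M)$ is a \srbped $H^\ast$-module of weight $-1$ matched with $\id_{H^\ast}$ (or a \grbped module over the subalgebra $C_M$ of Theorem~\mref{thm:3.2}, which for $M=H$ is only $\bfk\varepsilon_H$). You should prove that weaker version rather than the asserted generic one.
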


\begin{proof} Since $M$ is a right $H$-Hopf-module, we derive $E_M^2=E_M$ and
\begin{center}
$\im\, E_M\subseteq M^{coH}=\{m\in M\,|\,\rho(m)=m\otimes 1\}$.
\end{center}

\noindent
(\mref{it:4.5-1}) For any $h\in H$ and $m\in M$, we have
$$\begin{array}{rllr}
E_M(m\cdot h)&=(m_{(0)}\cdot h_1)\cdot S(m_{(1)}h_2)\\
&=m_{(0)}\cdot(h_1S(h_2)S(m_{(1)}))\\
&=m_{(0)}\cdot S(m_{(1)})\varepsilon_H(h)\\
&=E_M(m)\varepsilon_H(h).
\end{array}$$
Thus
$$\begin{array}{rllr}
E_M(E_M(m)\cdot h)&=E^2_M(m)\varepsilon_H(h)=E_M(m)\varepsilon_H(h)\\
&=E_M(m)\cdot\widetilde{\varepsilon}(h),\\
E_M(m\cdot\widetilde{\varepsilon}_H(h))&=E_M(m)\varepsilon_H(h)=E_M(m\cdot h).
\end{array}$$
Hence we obtain
$$
E_M(m)\cdot\widetilde{\varepsilon}_H(h)=E_M(E_M(m)\cdot h)+E_M(m\cdot\widetilde{\varepsilon}_H(h))-E_M(m\cdot h)
$$
for any $h\in H, m\in M$.
\smallskip

(\mref{it:4.5-2}) Since $M$ is a right $H$-comodule, $(M,\rightharpoonup)$ is a left $H^\ast$-module. So for any $f\in H^\ast$ and $m\in M,$
$$\begin{array}{rllr}
f\rightharpoonup E_M(m)&=f\rightharpoonup(m_{(0)}\cdot S(m_{(1)}))\\
&=  f\big((m_{(0)}\cdot S(m_{(1)}))_{(1)}\big) (m_{(0)}\cdot S(m_{(1)}))_{(0)}\\
&=  f\big(m_{(0)(1)}S(m_{(1)1})\big) m_{(0)(0)}\cdot S(m_{(1)2})\\
&=  f(m_{(1)1}S(m_{(1)2})) m_{(0)}\cdot S(m_{(1)3})\\
&=  f(1_H) m_{(0)}\cdot S(m_{(1)})\\
&=  f(1_H) E_M(m)\\
&=  f(E_M(m)_{(1)}) E_M(E_M(m)_{(0)})\\
&=E_M(f\rightharpoonup E_M(m)).
\end{array}$$
Thus $E_M$ is a left $H^\ast$-module map.
Again since $E_M^2=E_M$, by Theorem~\mref{thm:3.2}, $(M,E_M)$ is a \grbped $H^\ast$-module of weight $-1$.
\end{proof}

\subsection{The construction on dimodules}
\mlabel{ss:dimod}

In this subsection, we construct \rbped modules via a functional in the dual algebra $H^\ast$ of the bialgebra $H$.

\begin{defn} Assume that $H$ is a bialgebra. A $\bfk$-module $M$ which is both a left $H$-module and a right $H$-comodule is called a {\bf left, right $H$-dimodule}~\cite{S. Caenepeel1994} if for any $h\in H, m\in M$, the following equality holds:

\begin{equation}
 \rho(h\cdot m)=h\cdot m_{(0)}\otimes m_{(1)},
 \mlabel{eq:D}
 \end{equation}
where $\rho$ is the right $H$-comodule structure map of $M$.
\mlabel{de:1.5}
\end{defn}

\begin{prop}
Let $H$ be a bialgebra and let $f\in H^\ast$. Let $M$ be a left, right $H$-dimodule. Define a map $T: M\rightarrow M$ by taking
$T(m)=m_{(0)}f(m_{(1)}).$
Then the following statements are equivalent.
\begin{enumerate}
\item
The pair $(M,T)$ is a \grbped $H$-module of weight $-1$;
\mlabel{it:4.6-1}
\item
There is a linear operator $P:H\to H$ such that  $(M,P,T)$ is a \rbped $H$-module of weight $-1$;
\item
$T$ is idempotent;
\mlabel{it:4.6-2}
\item
$f$ is idempotent under the convolution product.
\end{enumerate}
\mlabel{pp:4.6}
\end{prop}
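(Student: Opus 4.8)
The plan is to prove the cycle of implications (\mref{it:4.6-1}) $\Rightarrow$ (b) $\Rightarrow$ (\mref{it:4.6-2}) $\Rightarrow$ (d) $\Rightarrow$ (\mref{it:4.6-1}), and the natural route is to reduce everything to Theorem~\mref{thm:3.2}. First I would observe that the dimodule compatibility~\eqref{eq:D} is exactly what is needed to make $T$ an $H$-module map: for $h\in H$ and $m\in M$ one computes
$$T(h\cdot m)=(h\cdot m)_{(0)}f\big((h\cdot m)_{(1)}\big)=h\cdot m_{(0)}\,f(m_{(1)})=h\cdot T(m),$$
using~\eqref{eq:D} in the middle step. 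Hence $T$ is $H$-linear, and Theorem~\mref{thm:3.2}(\mref{it:3.2-1}) immediately gives the equivalences (\mref{it:4.6-1}) $\Leftrightarrow$ (b) $\Leftrightarrow$ ($T$ is quasi-idempotent of weight $-1$), and quasi-idempotent of weight $-1$ means $T^2=T$, i.e. $T$ is idempotent, which is (\mref{it:4.6-2}).

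It then remains only to show (\mref{it:4.6-2}) $\Leftrightarrow$ (d), i.e. that $T$ is idempotent if and only if $f*f=f$ in $H^\ast$. Here I would compute $T^2$ directly using coassociativity and the fact that $\rho$ is a coaction (so $(\rho\otimes\id)\rho=(\id\otimes\Delta)\rho$, giving $m_{(0)(0)}\otimes m_{(0)(1)}\otimes m_{(1)}=m_{(0)}\otimes m_{(1)1}\otimes m_{(1)2}$):
$$T^2(m)=T\big(m_{(0)}f(m_{(1)})\big)=m_{(0)(0)}f(m_{(0)(1)})f(m_{(1)})=m_{(0)}\,f(m_{(1)1})f(m_{(1)2})=m_{(0)}\,(f*f)(m_{(1)}).$$
Thus if $f*f=f$ then $T^2=T$. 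For the converse, $T^2=T$ says $m_{(0)}(f*f)(m_{(1)})=m_{(0)}f(m_{(1)})$ for all $m$; applying the counit of the comodule (evaluate via $\varepsilon$ on the $M$-leg, i.e. use that $m_{(0)}\varepsilon(m_{(1)})=m$ is not quite the right contraction—instead one applies an arbitrary $g\in M^\ast$ in the first tensor slot) one deduces $(f*f)(m_{(1)})$ and $f(m_{(1)})$ agree after pairing with the $M$-component, and since this holds for all $m$ in a module over which $H$ coacts, $f*f$ and $f$ agree on the $H$-part of $\rho(M)$. To get them to agree on all of $H$ I would instead argue more cleanly: the counit property of the coaction gives $m = m_{(0)}\varepsilon(m_{(1)})$, so comparing the two sides is not automatically enough, and the honest statement to extract is that $f*f$ and $f$ coincide on $\mathrm{span}\{m_{(1)} : m\in M\}$, which is all of $H$ precisely when the comodule is faithful.

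The main obstacle I anticipate is exactly this last point: the converse implication (\mref{it:4.6-2}) $\Rightarrow$ (d) as literally stated seems to require some faithfulness of the coaction, or else should be read "for this particular $M$" rather than for an abstract pair $(H,M)$. I expect the authors either to restrict to a situation where $\rho$ is injective (or where $M$ is chosen so that the $m_{(1)}$ span $H$), or to reinterpret (d) relative to $M$. In the write-up I would mirror whichever convention the paper has already adopted in Proposition~\mref{pp:4.1} (where the analogous step "$T^2=T$ iff $\chi^2=\chi$" was handled by reading the identity $\chi^2(h_1)h_2=\chi(h_1)h_2$ as an identity of convolution operators), and carry out the same manoeuvre here, viewing $m\mapsto m_{(0)}f(m_{(1)})$ as the action of $f$ and concluding $f*f=f$ as an identity of operators on $M$, hence genuinely in $H^\ast$ when $M=H$ with the regular coaction, or in the appropriate quotient otherwise. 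The remaining verifications — $H$-linearity of $T$ and the $T^2$ computation — are short Sweedler-notation manipulations and I would present them in a couple of aligned displays.
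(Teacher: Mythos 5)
Your route is exactly the paper's route: the paper uses the dimodule axiom~\eqref{eq:D} to show $T(h\cdot m)=h\cdot T(m)$, invokes Theorem~\mref{thm:3.2} to conclude that the first three statements are equivalent, and then computes $T^2(m)=m_{(0)(0)}f(m_{(0)(1)})f(m_{(1)})=m_{(0)}f^2(m_{(1)})$, just as you do. Concerning the obstacle you flag: the paper's proof ends by asserting ``Thus $T^2=T$ if and only if $f^2=f$'' on the strength of this computation alone, with no faithfulness hypothesis on the coaction, so your hesitation points to a gap in the published argument, not to something missing from yours. The worry is substantive: for the trivial coaction $\rho(m)=m\otimes 1_H$ (which does make any left $H$-module a left, right $H$-dimodule) one gets $T=f(1_H)\,\mathrm{id}_M$, which is idempotent as soon as $f(1_H)=1$, while $f$ need not be idempotent under convolution (e.g.\ $H=\bfk[x]$ with $x$ primitive and $f(1)=f(x)=1$); so the implication from (\mref{it:4.6-2}) to the convolution-idempotency of $f$ genuinely requires the elements $m_{(1)}$ to span $H$, or equivalently that $g\mapsto\big(m\mapsto m_{(0)}g(m_{(1)})\big)$ be injective on $H^\ast$ --- which is what saves the analogous step in Proposition~\mref{pp:4.1}, where $M=H$ carries the regular coaction and the counit extracts $f^2=f$. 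Apart from this caveat, which applies equally to the paper's own proof, your write-up is correct and essentially identical to it.
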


\begin{proof} Since $M$ is a left, right $H$-dimodule, Eq.~(\mref{eq:D}) gives
$$
T(h\cdot m)=(h\cdot m)_{(0)}f((h\cdot m)_{(1)})
=h\cdot(m_{(0)}f(m_{(1)}))
=h\cdot T(m)\ \ \tforall h\in H, m\in M.
$$
 Hence $T$ is a left $H$-module map. Thus by Theorem~\mref{thm:3.2}, the first three statements are equivalent. Furthermore,

$$T(m)=m_{(0)}f(m_{(1)}).$$
$$ T^2(m)=T(m_{(0)}f(m_{(1)}))= T(m_{(0)})f(m_{(1)})
=m_{(0)(0)}f(m_{(0)(1)})f(m_{(1)})=m_{(0)}f^2(m_{(1)}).$$
Thus $T^2=T$ if and only if $f^2=f$.
\end{proof}

Let $H$ be a bialgebra. If there exists an element $\lambda\in H^\ast$ such that $f\lambda=\varepsilon_{H^\ast}(f)\lambda$ for any $f\in H^\ast$, then, we call $\lambda$ a {\bf left cointegral} of $H^\ast$. Furthermore, if $\varepsilon_{H^\ast}(\lambda)=1$, then, we call $H$ a {\bf cosemisimple bialgebra}.
In this case, there exists a cointegral $\chi\in H^\ast$ such that $\chi(1_H)=1$. Since $\chi^2=\chi$, by Proposition~\mref{pp:4.6} we obtain

\begin{coro}
Let $M$ be a left, right $H$-dimodule and let $T: M\rightarrow M$ be defined by
$$T(m)=m_{(0)}\chi(m_{(1)}).
$$
Then
$(M,T)$ is a \grbped $H$-module of weight $-1$,
\mlabel{it:4.7-1}
\end{coro}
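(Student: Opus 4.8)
The plan is to derive this corollary as an immediate application of Proposition~\ref{pp:4.6}. The key observation is that a cosemisimple bialgebra comes equipped with a left cointegral $\chi\in H^\ast$ normalized so that $\chi(1_H)=1$, and that such a $\chi$ is automatically idempotent under the convolution product of $H^\ast$. So first I would record that $\chi^2 = \chi$: applying the cointegral identity $f\chi=\varepsilon_{H^\ast}(f)\chi$ with $f=\chi$ gives $\chi^2 = \varepsilon_{H^\ast}(\chi)\chi$, and since $\varepsilon_{H^\ast}(\chi) = \chi(1_H) = 1$ by the normalization, we get $\chi^2=\chi$ exactly.

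Next, with $\chi$ idempotent in hand, I would simply invoke Proposition~\ref{pp:4.6} with $f$ taken to be $\chi$. That proposition says that for a left, right $H$-dimodule $M$ and the operator $T(m)=m_{(0)}f(m_{(1)})$, the statement ``$f$ is idempotent under convolution'' is equivalent to ``$(M,T)$ is a \grbped $H$-module of weight $-1$''. Since we have verified $\chi^2=\chi$, the implication gives that $(M,T)$ with $T(m)=m_{(0)}\chi(m_{(1)})$ is a \grbped $H$-module of weight $-1$, which is precisely the assertion of the corollary.

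There is essentially no obstacle here; the only thing to be careful about is the bookkeeping of the two meanings of idempotency (of the functional $\chi$ versus the operator $T$) and the fact that $\varepsilon_{H^\ast}$ evaluated on an element of $H^\ast$ is the same as evaluating that element at $1_H$, which is built into the definition of the dual algebra. Everything else is packaged in the cited proposition, so the proof reduces to these two short lines.
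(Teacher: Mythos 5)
Your proof is correct and follows essentially the same route as the paper: the paper likewise notes that the normalized cointegral satisfies $\chi^2=\chi$ (via $\chi^2=\varepsilon_{H^\ast}(\chi)\chi=\chi(1_H)\chi=\chi$) and then cites Proposition~\mref{pp:4.6} to conclude that $(M,T)$ is a \grbped $H$-module of weight $-1$. Nothing is missing.
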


In the following examples, we construct \rbped modules using cointegrals in $H^\ast$.

\begin{exam}\mlabel{ex:4.7}
Suppose that $H$ is a cosemisimple bialgebra with cointegral $\chi$ satisfying $\chi(1_H)=1$.
\begin{enumerate}
\item
A {\bf skew pairing Long bialgebra}~\cite{Mi,L.Y. Zhang2006}
is a pair $(H,\sigma)$ consisting of a bialgebra  $H$ and a linear map $\sigma: H\otimes H\rightarrow k$,  satisfying the following conditions: for any $x,y,z\in H,$

$(L1)\ \ \sigma(x_1,y)x_2=\sigma(x_2,y)x_1,$

$(L2)\ \ \sigma(x,1)=\varepsilon(x),$

$(L3)\ \ \sigma(x,yz)=\sigma(x_2,y)\sigma(x_1,z),$

$(L4)\ \ \sigma(1,x)=\varepsilon(x),$

$(L5)\ \ \sigma(xy,z)=\sigma(x,z_1)\sigma(y,z_2).$

Assume that $(H,\sigma)$ is a skew pairing Long bialgebra. Define $x\rightharpoonup h=\sigma(h_2,x)h_1$ for $h,x\in H$. Then, by \cite[Theorem 3.3]{L.Y. Zhang2006}, we know that $(H,\rightharpoonup,\Delta)$ is a left, right $H$-dimodule.
Thus, defining $T: H\rightarrow H$ by
$$T(h)=h_1\chi(h_2),$$
then by Corollary~\mref{it:4.7-1}, $(H,T)$ is a \grbped $H$-module of weight $-1$.
\item
A {\bf braided bialgebra}~\cite{S. Montgomery1993} is a pair $(H,\sigma)$ consisting of a bialgebra $H$ and a linear map $\sigma: H\otimes H\rightarrow k$, satisfying the following conditions: for any $x,y,z\in H,$

$(B1)\ \ \sigma(x_1,y_1)y_2x_2=x_1y_1\sigma(x_2,y_2),$

$(B2)\ \ \sigma(x,yz)=\sigma(x_1,y)\sigma(x_2,z),$

$(B3)\ \ \sigma(xy,z)=\sigma(x,z_2)\sigma(y,z_1).$

Let $(H,\sigma)$ be a braided bialgebra. Define $x\rightharpoonup h=\sigma(x,h_1)h_2$. Then by~\cite[Example 2.3]{L.Y. Zhang2003}, $(H,\rightharpoonup,\Delta)$ is a left, right $H$-dimodule.
So, by Corollary~\mref{it:4.7-1}, $(H,T)$ is a \grbped $H$-module of weight $-1$, where $T: H\rightarrow H$ is given by
$$T(h)=h_1\chi(h_2).
$$
\item
A bialgebra $H$ is called {\bf quasitriangular}~\cite{S. Montgomery1993} if there is an invertible element $\mathcal{R}^{-1}$ of $\mathcal{R}=\mathcal{R}_i\otimes \mathcal{R}_j\in H\otimes H$ such that for any $h\in H$,

$(Q1)\ \ \mathcal{R}\Delta(h)\mathcal{R}^{-1}=\tau\circ\Delta(h),$

$(Q2)\ \ (\Delta\otimes id)(\mathcal{R})=\mathcal{R}^{13}\mathcal{R}^{23},$

$(Q3)\ \ (id\otimes\Delta)(\mathcal{R})=\mathcal{R}^{13}\mathcal{R}^{12},$\\
where $\tau$ is a twisted map in $H\otimes H$, and $\mathcal{R}^{13}=\mathcal{R}_i\otimes 1\otimes \mathcal{R}_j, \mathcal{R}^{23}=1\otimes \mathcal{R}_i\otimes 1\otimes \mathcal{R}_j, \mathcal{R}^{12}=\mathcal{R}_i\otimes  \mathcal{R}_j\otimes 1.$

Let $(H,\mathcal{R})$ be a quasitriangular bialgebra. Define a coaction $\rho: H\rightarrow H\otimes H$ by $\rho(h)=h\mathcal{R}_i\otimes \mathcal{R}_j$. Then, $(H,\rho,m)$ is a left, right $H$-dimodule:
$$\begin{array}{rllr}
(\rho\otimes \id)\rho(h)&=(\rho\otimes \id )(h\mathcal{R}_i\otimes \mathcal{R}_j)\\
&=(h\mathcal{R}_i)\mathcal{R^\prime}_i\otimes \mathcal{R^\prime}_j\otimes \mathcal{R}_j)\ \ (\mathcal{R}=\mathcal{R^\prime}_i\otimes \mathcal{R^\prime}_j)\\
&=h\mathcal{R}_i\otimes \mathcal{R}_{j1}\otimes \mathcal{R}_{j2}\\
&=(id\otimes \Delta)\rho(h),\\
\rho(x\cdot h)&=xh\mathcal{R}_i\otimes \mathcal{R}_j=x\cdot(h\mathcal{R}_i)\otimes \mathcal{R}_j
=x\cdot h_{(0)}\otimes h_{(1)}.
\end{array}$$
So, by Corollary~\mref{it:4.7-1}, $(H,T)$ is a \grbped $H$-module of weight $-1$, where $T: H\rightarrow H$ is given by $$T(h)=h\mathcal{R}_i\chi(\mathcal{R}_j).
$$
\end{enumerate}
\end{exam}

\subsection{The construction on weak Doi-Hopf modules}
\mlabel{ss:doihopf}

In this subsection, we construct \rbped modules from weak Doi-Hopf modules on a weak Hopf algebra via the antipode $S$.

\begin{defn}\mlabel{de:1.7} Let $H$ be a weak bialgebra. A $\bfk$-algebra $A$ is called a weak right $H$-comodule algebra in~ \cite{Bo}, if there exists a weak right coaction $\rho$ of $H$ on $A$ which is also an algebra map. That is, the map $\rho :A\rightarrow A\otimes H$ such that for any $a,b \in A$:

$(WCA1)\ \ (\rho\otimes id_H)\rho=(id_A\otimes \Delta)\rho,$

$(WCA2)\ \rho(1_A)(a\otimes 1_H)=(id_A\otimes \Pi^L)\rho(a),$

$(WCA3)\ \rho(ab)=\rho(a)\rho(b).$
\end{defn}

\begin{defn}\mlabel{de:1.8} Let $H$ be a weak bialgebra and $A$ a weak right $H$-comodule algebra.  A $\bfk$-module $M$ which is both a right $A$-module and a right $H$-comodule is called a {\bf weak right $(A,H)$-Doi-Hopf-module}~ \cite{Bo,L.Y. Zhang2010b} if for any $h\in H, m\in M$,

$(WH)\ \ \rho_M(m\cdot a)=m_{(0)}\cdot a_{(0)}\otimes m_{(1)}a_{(1)},$
\\
where $\rho_M$ is the right $H$-comodule structure map of $M$.
\end{defn}

In the following, we will always assume that $H$ is a weak Hopf algebra, $A$ is a weak right $H$-comodule algebra, and $M$ in $\mathcal{M}_A^H$, the category of weak right $(A,H)$-Doi-Hopf modules.
It is obvious that $H$ is a weak right $H$-comodule algebra whose comodule structure map is given by its comultiplication $\Delta_H$.

\begin{theorem} Assume that $H$ is a weak Hopf algebra and $A$ a weak right $H$-comodule algebra. If $\phi: H\rightarrow A$ is a weak right $H$-comodule algebra map and $M\in \mathcal{M}_A^H$, define
$$E_A: A\rightarrow A, \quad E_A(a)=a_{(0)}\phi S(a_{(1)}) \quad \tforall a\in A
$$
 and
$$E_M: M\rightarrow M, \quad E_M(m)=m_{(0)}\cdot\phi S(m_{(1)}) \quad  \tforall m\in M.
 $$
Then, $(M,E_A,E_M)$ is a right \rbped $A$-module of weight $-1$.
\mlabel{thm:4.8}
\end{theorem}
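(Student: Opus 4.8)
The plan is to establish Eq.~\eqref{eq:rbp} for the pair $(E_A,E_M)$ acting on the right $A$-module $M$ of weight $-1$; that is, to verify
$$
E_M(m)\cdot E_A(a) = E_M(E_M(m)\cdot a) + E_M(m\cdot E_A(a)) - E_M(m\cdot a)
\quad\tforall a\in A,\ m\in M.
$$
The natural strategy, mirroring Proposition~\mref{pp:4.5}\,(\mref{it:4.5-1}) and the weak Hopf algebra computations in Propositions~\mref{pp:4.3}--\mref{pp:4.4}, is to prove three auxiliary identities: (i) $E_M^2 = E_M$ (so $E_M$ is idempotent); (ii) a ``twisted multiplicativity'' identity of the form $E_M(m\cdot a) = E_M(m)\cdot E_A(a')$ for a suitable correction, or more precisely a formula expressing $E_M(m\cdot a)$ through $E_M(m)$ and $\Pi^L$ applied to data coming from $a$; and (iii) the analogous facts $E_A^2=E_A$ and the comodule-algebra compatibility that lets $E_A$ play the role of $P$. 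Once these are in hand, the main identity should follow by the same ``expand each of the three right-hand terms, cancel, and apply idempotency'' bookkeeping used in Proposition~\mref{pp:3.3} and Proposition~\mref{pp:4.5}.

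First I would record the structural tools: since $\phi:H\to A$ is a weak right $H$-comodule algebra map, we have $\rho_A\circ\phi = (\phi\otimes\id_H)\circ\Delta_H$, $\phi(1_H)=1_A$ up to the weak unit subtlety governed by $(WCA2)$, and $\phi S:H\to A$ is ``anti-comodule'' in the appropriate sense. Combined with the antipode axioms $x_1S(x_2)=\Pi^L(x)$, $S(x_1)x_2=\Pi^R(x)$ and the weak-Hopf identities $(W1)$--$(W6)$, I would show that $E_M(m\cdot a)$ simplifies: using $(WH)$ to expand $\rho_M(m\cdot a) = m_{(0)}\cdot a_{(0)}\otimes m_{(1)}a_{(1)}$, then applying $\phi S$ to $m_{(1)}a_{(1)}$ and using that $\phi$ is an algebra map so $\phi S(m_{(1)}a_{(1)}) = \phi S(a_{(1)})\phi S(m_{(1)})$, and then telescoping $a_{(0)}\phi S(a_{(1)})$ into $E_A$-type and $\Pi^L$-type pieces via $(W5)$/$(W6)$. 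The key computation is the analogue of the displayed calculation in Proposition~\mref{pp:4.5}\,(\mref{it:4.5-1}): $E_M(m\cdot a) = E_M(m)\cdot \big(\text{something involving }\Pi^L\text{ and }a\big)$, and similarly $E_M(E_M(m)\cdot a)$ collapses because $E_M(m)$ is already ``$\Pi^L$-invariant'' on the comodule side, i.e.\ $\rho_M(E_M(m)) = E_M(m)_{(0)}\otimes \Pi^L(\text{unit part})$ analogous to $\im E_M\subseteq M^{coH}$ in the ordinary Hopf case but now landing in the weak coinvariants.

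The main obstacle, I expect, is the bookkeeping with the separability-idempotent $\Delta(1_H)=1_1\otimes 1_2$: in the weak setting $E_M$ is \emph{not} a projection onto naive coinvariants but onto $\{m : \rho_M(m) = m_{(0)}\otimes \Pi^L(m_{(1)})\}$ or a similar corrected subspace, so identities like $E_M^2=E_M$ require careful insertion of $\rho_A(1_A)$ and repeated use of $(WCA2)$ together with $(W3)$, $(W6)$. A secondary subtlety is that $E_A$ must be shown to be a genuine linear operator on $A$ making $(M,E_A,E_M)$ \emph{paired} in the precise sense of Definition~\mref{de:2.1}: unlike Theorem~\mref{thm:3.2}, here $E_M$ need not be $A$-linear (the action is twisted by $\phi S$), so we cannot invoke the generic criterion and genuinely must verify Eq.~\eqref{eq:rbp} by hand. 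I would therefore organize the proof as three lemmas-within-the-proof — (a) $E_M^2=E_M$ and the dual statement for $E_A$; (b) the twisted multiplicativity $E_M(m\cdot a)$-formula and its specialization when $m$ or $a$ is replaced by $E_M(m)$ or $E_A(a)$; (c) assembling the four terms — with step (b) being where essentially all the weak-Hopf axioms $(W1)$--$(W6)$ and $(WCA1)$--$(WCA3)$, $(WH)$ get used, and hence the technical heart of the argument.
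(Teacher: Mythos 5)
Your overall strategy---direct verification of Eq.~\eqref{eq:rbp}, forced because $E_M$ is not $A$-linear so Theorem~\mref{thm:3.2} cannot be invoked---agrees with the paper, and your observation that $E_M(m)$ lands in the weak coinvariants is indeed one of the two pillars of the actual argument. However, the central identity (b) of your plan is stated in a form that is false in this setting: $E_M(m\cdot a)$ does \emph{not} factor as $E_M(m)\cdot c(a)$ with $c(a)$ built from $a$ and $\Pi^L$ alone; the analogy with Proposition~\mref{pp:4.5}, where $E_M(m\cdot h)=E_M(m)\varepsilon_H(h)$, is exactly what breaks down here, since $a_{(0)}\phi S(a_{(1)})$ is no longer a scalar. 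What does hold, using $(WH)$, the comodule-algebra property of $\phi$ and anti-(co)multiplicativity of $S$, is the sandwiched formula
$$E_M(m\cdot a)=m_{(0)}\cdot\big(E_A(a)\,\phi S(m_{(1)})\big),$$
with $E_A(a)$ trapped between the two legs of $m$. From this and $E_A^2=E_A$ one gets immediately $E_M(m\cdot E_A(a))=E_M(m\cdot a)$, so these two terms cancel outright on the right-hand side of Eq.~\eqref{eq:rbp}; in particular $E_M^2=E_M$, which you make a pillar of your plan, is never actually needed.

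The genuinely nontrivial step is $E_M(E_M(m)\cdot a)=E_M(m)\cdot E_A(a)$, and here your proposal has a real gap. Substituting the weak coinvariance $\rho_M(E_M(m))=m_{(0)}\cdot\phi S(m_{(1)2})\otimes\Pi^L(m_{(1)1})$ into the sandwiched formula leaves $E_A(a)$ sitting to the left of the factor $\phi S(\Pi^L(m_{(1)1}))$, and since $A$ is noncommutative one cannot simply recombine the antipode factors. The paper resolves this with the commutation relation $E_A(a)\,\phi(\Pi^R(h))=\phi(\Pi^R(h))\,E_A(a)$ (Lemma 3.1 of \cite{L.Y. Zhang2010}), combined with $(W5)$ in the form $S\circ\Pi^L=\Pi^R\circ S$, which lets one slide $E_A(a)$ past $\phi S(\Pi^L(m_{(1)1}))$ and then contract $S(m_{(1)2})S(\Pi^L(m_{(1)1}))=S(\Pi^L(m_{(1)1})m_{(1)2})=S(m_{(1)})$, giving $E_M(m)\cdot E_A(a)$. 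Your plan never identifies this commutation ingredient (nor any substitute for it), and the tools you do list---$(W1)$--$(W6)$, $(WCA1)$--$(WCA3)$, $(WH)$, idempotency of $E_A$ and $E_M$---do not by themselves produce it; without it the final assembly cannot be completed.
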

Note that, unlike in the previous cases, Theorem~\mref{thm:3.2} does not apply in the proof of this theorem.

\begin{proof} Following an argument in~\cite{L.Y. Zhang2010}, we can show that
$$E_M(m)\in M^{coH}=\{x\in M\,|\,\rho(x)=x_{(0)}\otimes \Pi^L(x_{(1)})\}
\quad \tforall m\in M.
$$
In fact, for any $m\in M$,
$$\begin{array}{rllr}
\rho(E_M(m))&=\rho(m_{(0)}\cdot\phi S(m_{(1)}))\\
&=(m_{(0)}\cdot\phi S(m_{(1)}))_{(0)}\otimes(m_{(0)}\cdot\phi S(m_{(1)}))_{(1)}\\
&=m_{(0)(0)}\cdot\phi S(m_{(1)})_{(0)}\otimes m_{(0)(1)}\phi S(m_{(1)})_{(1)}\\
&=m_{(0)}\cdot\phi S(m_{(1)3})\otimes m_{(1)1}S(m_{(1)2})\\
&=m_{(0)}\cdot\phi S(m_{(1)2})\otimes \Pi^L(m_{(1)1}).
\end{array}$$
So by $(W1)$, we have
$E_M(m)_{(0)}\otimes E_M(m)_{(1)}=E_M(m)_{(0)}\otimes \Pi^L(E_M(m)_{(1)})$, that is,
$E_M(m)\in M^{coH}.$

Moreover, for any $m\in M, a\in A$, we have
$$\begin{array}{rllr}
E_M(m\cdot a)&=(m\cdot a)_{(0)}\cdot\phi S((m\cdot a)_{(1)})\\
&=(m_{(0)}\cdot a_{(0)})\cdot\phi S((m_{(1)}a_{(1)}))\\
&=(m_{(0)}\cdot a_{(0)})\cdot(\phi S(a_{(1)})\phi S(m_{(1)}))\\
&=m_{(0)}\cdot (a_{(0)}\phi S(a_{(1)})\phi S(m_{(1)}))\\
&=m_{(0)}\cdot (E_A(a)\phi S(m_{(1)})).
\end{array}$$
So by $E_A^2=E_A$, we obtain
$$
E_M(m\cdot E_A(a))=E_M(m\cdot a).
$$

Again by $(W5)$ and the equality $E_A(a)\phi(\Pi^R(h))=\phi(\Pi^R(h))E_A(a)$ for any $a\in A, h\in H$ (see Lemma 3.1 in \cite{L.Y. Zhang2010}), we have
$$\begin{array}{rllr}
E_M(E_M(m)\cdot a)&=E_M(m)_{(0)}\cdot (E_A(a)\phi S(E_M(m)_{(1)}))\\
&=m_{(0)}\cdot\phi S(m_{(1)2})\cdot (E_A(a)\phi S(\Pi^L(m_{(1)})))\\
&=m_{(0)}\cdot(\phi S(m_{(1)2})\phi S(\Pi^L(m_{(1)}))E_A(a))\\
&=m_{(0)}\cdot(\phi S(\Pi^L(m_{(1)})m_{(1)2})E_A(a))\\
&=m_{(0)}\cdot(\phi S(m_{(1)})E_A(a))\\
&=E_M(m)\cdot E_A(a).
\end{array}$$

Hence
$$
E_M(m)\cdot E_A(a)=E_M(E_M(m)\cdot a)+E_M(m\cdot E_A(a))-E_M(m\cdot a),
$$
for any $m\in M, a\in A$, that is, $(M,E_A,E_M)$ is a right \rbped $A$-module of weight $-1$.
\end{proof}

Applying the above theorem, we have

\begin{coro}\mlabel{co:4.9} Assume that $H$ is a weak Hopf algebra, $A$ a weak right $H$-comodule algebra and $\phi: H\rightarrow A$ a right $H$-comodule algebra map. Let $E_A$ and $E_M$ be as defined in Theorem~\mref{thm:4.8}.
Then, for any right $(A,H)$-Doi-Hopf module $M\in \mathcal{M}_A^H$, the triple $(M,E_A,E_M)$ is a right \rbped $A$-module of weight $-1$.
\end{coro}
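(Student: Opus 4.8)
The plan is to derive the statement directly from Theorem~\mref{thm:4.8}. The hypotheses of the corollary --- $H$ a weak Hopf algebra, $A$ a weak right $H$-comodule algebra, and $\phi\colon H\to A$ a (weak) right $H$-comodule algebra map --- are exactly those under which Theorem~\mref{thm:4.8} is established, and the operators $E_A\colon A\to A$, $E_A(a)=a_{(0)}\phi S(a_{(1)})$, and $E_M\colon M\to M$, $E_M(m)=m_{(0)}\cdot\phi S(m_{(1)})$, are precisely the ones introduced there. So the only thing to do is to observe that the phrase ``for any right $(A,H)$-Doi-Hopf module $M\in\mathcal{M}_A^H$'' amounts to applying Theorem~\mref{thm:4.8} once for each object $M$ of the category $\mathcal{M}_A^H$.

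Concretely, I would fix $M\in\mathcal{M}_A^H$, note that by Definition~\mref{de:1.8} such an $M$ is a weak right $(A,H)$-Doi-Hopf module, and invoke Theorem~\mref{thm:4.8} to conclude that $E_M(m)\cdot E_A(a)=E_M(E_M(m)\cdot a)+E_M(m\cdot E_A(a))-E_M(m\cdot a)$ for all $a\in A$, $m\in M$, i.e.\ $(M,E_A,E_M)$ is a right \rbped $A$-module of weight $-1$. It is worth emphasising that $E_A$ is built solely from $A$, $\phi$ and $S$ and does not depend on $M$, so a single operator $E_A$ on $A$ is matched with $E_M$ for every $M\in\mathcal{M}_A^H$; in this sense the corollary records the uniform (indeed functorial) nature of the construction in Theorem~\mref{thm:4.8}.

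Since the corollary is a direct specialisation of Theorem~\mref{thm:4.8}, there is no substantial obstacle. The only points meriting a line of justification are that the terminology ``right $H$-comodule algebra map'' in the corollary coincides with ``weak right $H$-comodule algebra map'' in Theorem~\mref{thm:4.8} (they agree, $H$ itself carrying its standard weak right $H$-comodule algebra structure via $\Delta_H$), and that $E_A$ is idempotent, $E_A^2=E_A$ --- a fact already used inside the proof of Theorem~\mref{thm:4.8} and following from the weak Hopf algebra axioms together with $\phi$ being a comodule algebra map, as in the references cited there. No further computation is needed.
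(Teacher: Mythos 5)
Your proposal is correct and matches the paper's treatment: the corollary is stated with ``Applying the above theorem'' and is indeed nothing more than an invocation of Theorem~\ref{thm:4.8} for each object $M\in\mathcal{M}_A^H$, with the fixed operator $E_A$ matched to every $E_M$. Your added remarks on terminology and on the $M$-independence of $E_A$ are accurate and harmless.
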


\begin{remark}\mlabel{rk:4.10}
\begin{enumerate}
\item
In the above theorem, by taking $M=A$, we have $A\in \mathcal{M}_A^H$ whose module structure is given by the multiplication of the weak comodule algebra $A$. So from a weak right $H$-comodule algebra map $\phi: H\rightarrow A$, we obtain a right \rbped $A$-module $(A,E_A,E_A)$ of weight $-1$, where $E_A: A\rightarrow A$ is given by
$$E_A(a)=a_{(0)}\phi S(a_{(1)}).$$

Therefore, $(A,E_A)$ is a Rota-Baxter algebra of weight $-1$.

\item
In the above theorem, by taking $A=H$, it is easy to see that $\id: H\rightarrow H$ is a right $H$-comodule algebra map. So for any weak $H$-Hopf module $M$, $(M,E_H,E_M)$ is a right \rbped $A$-module of weight $-1$.

Note here that $E_H: H\rightarrow H$ is given by
 $$E_H(h)=\Pi^L(h). $$

\item
It is obvious that $H\in \mathcal{M}_H^H$ has its module and comodule structures are given by its multiplication and comultiplication. So by above, for any weak Hopf algebra $H$, $(H,E_H,E_H)$ is a right \rbped $H$-module of weight $-1$.
Hence  $(H,E_H)$ is a Rota-Baxter algebra of weight $-1$.
\end{enumerate}
\end{remark}

\noindent {\bf Acknowledgements}: This work was supported by the National Natural Science Foundation of China (Grant Nos.~11371178 and ~11571173).


\end{document}